\documentclass[12pt]{article}
\textheight=20cm
\textwidth=14cm

\usepackage{amssymb, latexsym,pdfsync,amsmath,amsthm, ulem,hyperref,graphicx}
\usepackage{fullpage}
\usepackage{pgf,tikz}
\usepackage{mathrsfs}
\usetikzlibrary{arrows}
\pagestyle{empty}

\newtheorem{theorem}{Theorem}
\newtheorem{corollary}{Corollary}

\newtheorem{proposition}{Proposition}

\theoremstyle{definition}
\newtheorem{definition}{Definition}
\newtheorem{remark}{Remark}
\newtheorem{example}{Example}
\newtheorem*{maintheorem}{Main Results}

\newcommand{\SSS}{\mathbb{S}}
\newcommand{\NN}{\mathbb{N}}

\newcommand{\FF}{\mathbb{F}}

\newcommand{\Fq}{\mathbb{F}_q}
\newcommand{\Fqn}{\mathbb{F}_{q^n}}

\newcommand{\Fqt}{\mathbb{F}_{q^2}}

\newcommand{\HH}{\mathcal H}

\newcommand{\G}{\mathcal G}
\newcommand{\C}{\mathcal C}
\newcommand{\cL}{\mathcal L}


\def\Fq{{\mathbb{F}}_q}
\def\Fqs{{\mathbb{F}}_{q^s}}
\def\Hom{\mathrm{Hom}}
\def\Aut{\mathrm{Aut}}

\def\GL{\mathrm{GL}}

\def\GammaL{\mathrm{\Gamma L}}
\def\dim{\mathrm{dim}}

\def\L{\mathcal{L}}
\def\id{\mathrm{id}}

\def\End{\mathrm{End}}

\def\gcrd{\mathrm{gcrd}}
\def\gcld{\mathrm{gcld}}
\def\Gal{\mathrm{Gal}}
\def\Ann{\mathrm{Ann}}
\def\I{\mathcal{I}}

\DeclareMathOperator{\modr}{~mod_r}

\newcommand{\npmatrix}[1]{\left( \begin{matrix} #1 \end{matrix} \right)}

\newtheorem{Proposition}{Proposition}

\newcommand{\rank}{\mathrm{rank}}

\setlength{\parindent}{0pt}
\setlength{\parskip}{1ex plus 0.5ex minus 0.2ex}

\begin{document}
\title{New Semifields and new MRD Codes from Skew Polynomial Rings}
\author{John Sheekey}
\date{}
\maketitle

\begin{abstract}
In this article we construct a new family of semifields, containing and extending two well-known families, namely Albert's generalised twisted fields and Petit's cyclic semifields (also known as Johnson-Jha semifields). The construction also gives examples of semifields with parameters for which no examples were previously known. In the case of semifields two dimensions over a nucleus and four-dimensional over their centre, the construction gives all possible examples.

Furthermore we embed these semifields in a new family of maximum rank-distance codes, encompassing most known current constructions, including the (twisted) Delsarte-Gabidulin codes, and containing new examples for most parameters.
\end{abstract}

\section{Introduction}

\subsection{History and Background}
A {\it semifield} is a division algebra $\SSS$ over a field with identity, in which multiplication is not assumed to be associative, and in which both left- and right-multiplication by a fixed element define an invertible endomorphism; in other words, for all $a,b\in \SSS$ there exist unique $x,y\in \SSS$ such that $ax=b$ and $ya=b$. Over the real numbers, a non-trivial example are the {\it octonions} discovered by Graves in 1843.  

If $\SSS$ has a finite number of elements, then it is a finite dimensional algebra over a finite field, and it suffices to define a semifield as a (nonassociative) division algebra. The study of finite semifields dates back to Dickson \cite{Dickson1906}, who constructed the first non-trivial examples in 1906. This contrasts with the famous Wedderburn-Dickson Theorem \cite{Dickson1905}, which states that any associative finite division algebra is in fact commutative, and hence a finite field.

The study of semifields was advanced in the mid 20th century, in particular by Albert, who constructed a family known as {\it generalised twisted fields} and developed the equivalence between semifields and certain classes of projective planes, and by Knuth, whose PhD thesis in 1965 \cite{Knuth1965} provided further constructions and details of the equivalence between semifields and nonsingular tensors. 

Ore \cite{Ore1932} and Jacobson \cite{Jacobson1934} each studied associative algebras arising from {\it skew-polynomial rings}, as generalisations of {\it cyclic algebras} and {\it Cayley-Dickson algebras}. Their results also implied a construction for semifields, though this was not explicitly stated. Petit \cite{Petit} made this construction explicit in 1965, though this work was not well-known to those studying finite semifields until recently; indeed the construction was rediscovered by Jha-Johnson in 1989 \cite{JHJO1989}, using the language of semilinear transformations. These are sometimes referred to as {\it cyclic semifields}. 

This construction was the largest known family of semifields at the time of writing, and thus gives the best lower bound for the number of semifields of a given order. Kantor \cite{Kantor2006} has conjectured that the number of semifields of a given order is not bounded above by any polynomial. This remains an open problem.

Semifields are studied not only in their own right, but also because of their connections to various other objects; for example classes of projective planes, nonsingular tensors, flocks of a quadratic cone, and ovoids. Very recently they have been studied due to the fact that they are special cases of {\it maximum rank distance codes}. We refer to the recent surveys \cite{LaPo2011}, \cite{Kantor2006}, \cite{Willems} for these connections, and for the known constructions. 

In this paper we use skew-polynomial rings to provide a new construction of semifields, and more generally of maximum rank-distance codes. This family contains both the generalised twisted fields and cyclic semifields as special cases, and hence is now the largest known construction. It also contains as a proper sub-family the maximum rank-distance codes recently introduced by the author in \cite{SheekeyMRD}, including the Delsarte-Gabidulin codes.

%


%
%

\subsection{Layout and Main Results}

In Section \ref{sec:known} we formally introduce semifields and maximum rank-distance codes, and illustrate how the former gives examples of the latter. We outline the necessary notions of equivalence, and recall the known constructions. In Section \ref{sec:skew} we  review skew-polynomial rings, and prove some properties which will be the foundations of the ensuing new construction. In Section \ref{sec:const} we use skew-polynomial rings to give a construction for a large class of MRD codes, including semifields. In Section \ref{sec:isnew} we show that this family contains new MRD codes and new semifields, including examples with nuclear parameters for which there were no previous examples known.

We summarise here our main results for finite fields (though they hold for a more general class of fields), and outline the main steps required for the proof. 
\begin{maintheorem}
\label{thm:mainstatement}
Let $\Fqn[x;\sigma]$ be the skew-polynomial ring, with $\sigma$ an automorphism of $\Fqn$ of order $n$. Let $F\in \Fq[y]$ be irreducible of degree $s$. Then the image of the set

\[
\{a_0+a_1x+\cdots+a_{sk-1}x^{sk-1}+\eta a_0^{\rho}x^{sk}:a_i \in \Fqn\} \subset \Fqn[x;\sigma]
\]
in the quotient ring 
\[
\frac{\Fqn[x;\sigma]}{(F(x^n))}\simeq M_n(\FF_{q^s})
\]
is a maximum rank-distance code, i.e. a set of size $q^{nks}$ such that the difference of any two elements has rank at least $n-k+1$, provided $\eta$ satisfies a certain condition on its norm.

Taking $k=1$ returns a family of semifields, including the generalised twisted fields, the cyclic semifields, and new semifields, including semifields with parameters for which there were no previously known examples. 

Taking $s=1$ returns the generalised twisted Gabidulin codes, and the family contains new MRD codes. 
\end{maintheorem}

In order to prove this we need to show the following.
\begin{itemize}
\item
Show that the above quotient ring is isomorphic to the above matrix ring (Section \ref{ssec:matfromskew});
\item
Show that the rank of the image of a skew-polynomial $f$ in the matrix ring is related to the greatest common right divisor of $f$ and $F(x^n)$ (Section \ref{ssec:rank});
\item
Find conditions for a skew-polynomial to be a divisor of $F(x^n)$, and thus low rank (Section \ref{ssec:div});
\item
Show that the above set avoids these divisors, and hence every element of the image has high rank, subsequently proving that we have constructed MRD codes/semifields (Section \ref{sec:const});
\item
Calculate the nuclear parameters of these sets, in order to prove inequivalence to known constructions (Section \ref{sec:isnew}).
\end{itemize}

The construction leads to a larger number of equivalence classes of both semifields and MRD codes than any known construction, and contains many previous constructions as proper subfamilies (Sections \ref{ssec:ns2}, \ref{sec:isnew}). Explicit representatives of new semifields are included in Section \ref{ssec:explicit}.

\subsection{Notation}

Throughout the rest of this paper, $K$ will denote a field, $L$ a cyclic Galois extension of degree $n$, and $\sigma$ a generator of $\Gal(L:K)$. We will consider rank-metric codes as subsets of $\End_K(L)$ (thus we restrict ourselves to the case of square matrices), and fix an isomorphism $\End_K(L)\simeq M_{n\times n}(K)$. When $K$ is finite, we will take $K=\Fq$, $L=\Fqn$, and $\sigma_q$ will denote the Frobenius automorphism $x\mapsto x^q$. Should $K$ not possess a cyclic Galois extension field of degree $n$, much of what follows is still valid if we replace $L$ by the vector space $K^n$.

\section{Semifields and Maximum Rank Distance Codes}\label{sec:RM}
\label{sec:known}

\subsection{Semifields}
\label{ssec:semifield}

Finite {\it presemifields} are division algebras with a finite number of elements in which multiplication is not assumed to be associative. If a presemifield contains a multiplicative identity, then it is called a {\it semifield}.

We will identify the elements of a semifield $n$-dimensional over $K$ with the elements of the field extension $L$. Two presemifields $\SSS=(L,\circ)$ and $\SSS'=(L,\circ')$ are said to be {\it isotopic} if there exist invertible additive maps $A,B,C$ from $L$ to $L$ such that
\[
(a\circ b)^A = a^B \circ' b^C
\]
for all $a,b\in L$. The set of semifields isotopic to $\SSS$ is called the {\it isotopy class} of $\SSS$, and is denoted by $[\SSS]$. All presemifields are isotopic to a semifield; for this reason, we will simply refer to semifields from here on.

There is a vast array of known constructions for semifields. We refer to \cite{LaPo2011} for further details.  A non-trivial example is Albert's {\it generalised twisted fields}, which are defined as follows. The elements are the elements of $L$, and multiplication is defined by
\[
a\circ b = ab-\eta a^{\sigma^i}b^{\sigma^j},
\]
where $\eta$ is a fixed element of $L$ such that $N(\eta)= \eta^{1+\sigma+\cdots+\sigma^{n-1}}\ne 1$.

For an element $x$ of a semifield $\SSS = (L,\circ)$, we can define the endomorphism of left multiplication $M_a\in \End_K(L)$ by
\[
M_a(b) := a\circ b.
\]
We then define the {\it spread set} of the semifield $\SSS$ to be the set
\[
\C(\SSS) := \{M_a:a \in \SSS\} \subseteq \End_K(L).
\]
As $\SSS$ is a (finite dimensional) division algebra, it is clear that every $M_a$ is invertible, except $M_0$. Furthermore as $M_{\lambda a + b}=\lambda M_a+M_b$ for all $\lambda\in K$, $a,b\in L$, it is clear that $\C(\SSS)$ forms a $K$-subspace of $\End_{K}(L)$. Hence we have the following well-known fact \cite{LaPo2011}.

\begin{proposition}
If $\SSS$ is a semifield $n$-dimensional over $K$, then $\C(\SSS)$ is an $n$-dimensional $K$-subspace of $\End_K(L)$ in which every non-zero element is invertible. Conversely, if $\C$ is such a subspace of $\End_K(L)$, then for any isomorphism $\phi$ from $L$ to $\C$ we can define a semifield $\SSS(\C) =(L,\circ_{\phi})$ by
\[
a\circ_{\phi} b := \phi(a)(b).
\]
\end{proposition}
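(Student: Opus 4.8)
The plan is to prove the two directions separately, both being essentially direct verifications once the right language is fixed.

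For the forward direction, suppose $\SSS = (L,\circ)$ is a semifield of dimension $n$ over $K$. First I would check that $\C(\SSS) = \{M_a : a\in L\}$ is a $K$-subspace: since $\circ$ is biadditive and $K$-bilinear in the appropriate sense, the map $a\mapsto M_a$ is $K$-linear (this is already observed in the excerpt via $M_{\lambda a+b} = \lambda M_a + M_b$), so its image is a $K$-subspace of $\End_K(L)$. Next I would show this map is injective, hence $\dim_K \C(\SSS) = \dim_K L = n$: if $M_a = M_b$ then $a\circ c = b\circ c$ for all $c$, so $(a-b)\circ c = 0$ for all $c$; taking any $c\neq 0$ and using that $\SSS$ is a division algebra (right multiplication by $c$ is injective, or: the product of two nonzero elements is nonzero), we get $a-b = 0$. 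Finally, every nonzero element of $\C(\SSS)$ is invertible: a nonzero element is $M_a$ with $a\neq 0$, and $M_a$ is injective because $a\circ b = 0$ with $a\neq 0$ forces $b=0$ (division algebra), hence bijective as $L$ is finite-dimensional over $K$; here if $K$ is infinite one invokes instead the explicit hypothesis that left multiplication by a fixed nonzero element is an invertible endomorphism. This establishes the first sentence.

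For the converse, let $\C\subseteq \End_K(L)$ be an $n$-dimensional $K$-subspace in which every nonzero element is invertible, and let $\phi : L \to \C$ be a $K$-linear isomorphism (one exists since $\dim_K L = n = \dim_K \C$). Define $a\circ_\phi b := \phi(a)(b)$. I would verify: (i) biadditivity — $a\mapsto \phi(a)$ is additive and each $\phi(a)$ is additive, so $\circ_\phi$ is additive in each argument; (ii) no zero divisors — if $a\circ_\phi b = 0$ with $a\neq 0$, then $\phi(a)$ is a nonzero element of $\C$, hence invertible, hence $b = 0$; symmetrically, if $b\neq 0$ then the map $a\mapsto \phi(a)(b)$ is $K$-linear $L\to L$ and injective (its kernel consists of $a$ with $\phi(a)(b)=0$, forcing $\phi(a)=0$ by invertibility of nonzero elements of $\C$, forcing $a=0$ since $\phi$ is injective), hence bijective, so $a\circ_\phi b = 0$ implies $a = 0$. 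Thus $(L,\circ_\phi)$ has unique solutions to $a\circ x = b$ and $y\circ a = b$ for $a\neq 0$, i.e.\ it is a presemifield; by the remark in the excerpt that every presemifield is isotopic to a semifield, we obtain a semifield, and I would note its spread set is exactly $\C$ up to the identification.

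The only genuine subtlety, and the step I would flag as the main point to get right, is the finiteness-versus-infiniteness issue: over a finite field, "injective $K$-endomorphism of $L$" immediately gives "bijective", whereas for general $K$ one must use the explicit two-sided invertibility hypothesis in the definition of semifield (and the fact, recorded in the excerpt, that $\C$ being a space of invertible maps is exactly what is needed). Everything else is routine bookkeeping with bilinearity and the no-zero-divisor property; no hard estimate or construction is involved. I would keep the write-up short, citing \cite{LaPo2011} for the statement being well known and emphasizing only the injectivity of $a\mapsto M_a$ and the invertibility argument.
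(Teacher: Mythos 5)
Your proof is correct. The paper itself gives no argument for this proposition — it is stated as a well-known fact with a citation to \cite{LaPo2011} — so there is nothing to diverge from; your verification (linearity and injectivity of $a\mapsto M_a$ for the forward direction, invertibility of $\phi(a)$ plus rank--nullity for the converse, with the correct caveat that $\circ_\phi$ a priori only yields a presemifield, which is then isotopic to a semifield) is exactly the standard argument one would supply, and your handling of the finite-versus-infinite base field point is careful and accurate since $L$ is in any case finite-dimensional over $K$.
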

Note that in some of the literature the spread set is defined using the endomorphisms of right multiplication; $R_a(b) = b\circ a$.  Note also that different choices for the isomorphism $\phi$ lead to isotopic semifields. A subspace of endomorphisms in which every non-zero element is invertible is a special case of a {\it maximum rank-distance code}, which we will introduce in the next section.

%

%
%
%
%
%
%

\subsection{Rank-metric codes}

A {\it rank-metric code} is a set $\C$ of vector-space homomorphisms from $K^m$ to $K^n$, $m\leq n$, equipped with the rank-distance function
\[
d(A,B) = \rank(A-B).
\]
In other words, $\C$ is a subset of $\Hom_K(K^m,K^n)$. If we choose a basis for each of $K^m$ and $K^n$, we may represent $\C$ by a set of $m \times n$ matrices $K$. Such a code is said to be {\it $K'$-linear} if it is closed under addition and $K'$-multiplication for some subfield $K'$ of $K$. In the case where $K$ possesses a field extension $L$ of degree $n$, we may also represent $\C$ by a set of vectors in $L^m$.


A subset $\C$ of $M_{m\times n}(K)$ must satisfy the {\it Singleton-like bound}; that is, if $\C$ has minimum distance $d$ (i.e. $\rank(A-B)\geq d$ for all $A,B\in \C$, $A\ne B$), then
\begin{itemize}
\item
if $\C$ is $K'$-linear, then $\dim_K'(\C)\leq n(m-d+1)[K:K']$;
\item 
if $K$ is finite, then $|\C|\leq |K|^{n(m-d+1)}$.
\end{itemize}

A code attaining this bound is said to be a {\it Maximum Rank Distance} code, or {\it MRD code}. Delsarte \cite{Delsarte1978} showed that MRD codes exist over every finite field for all parameters; in fact, he showed that $K$-linear MRD codes can be constructed for any finite field $K$ and any $m,n,d\leq \min\{m,n\}$. These were independently rediscovered by Gabidulin \cite{Gab1985} in the equivalent formulation as codes in $L^m$, and have come to be known as {\it Gabidulin codes} or {\it Delsarte-Gabidulin codes}.

Rank-metric codes have been studied in recent years in part due to their potential applications in {\it random network coding}; see for example \cite{SiKsKo2008}.

In the special case where $n=m=d$, $K$-linear MRD codes corresponds to semifields, while non-linear MRD codes correspond to algebraic structures called {\it quasifields}; see for example \cite{Handbook},\cite{Willems}. 

%
%
%

%
%

\subsection{Known Constructions for MRD codes}

Many of the known constructions exploit the correspondence between matrices (or vector space endomorphisms) and {\it linearized polynomials}; see for example \cite{SheekeyMRD}, \cite{Csaj}. The set of linearized polynomials is defined as follows.
\[
\cL(L,K,\sigma) := \{ f(X)= f_0X+f_1X^{\sigma}+\cdots+f_{n-1}X^{\sigma^{n-1}}:f_i \in L \}.
\]
Clearly any linearized polynomial defines a $K$-linear map of $L$, $[x\mapsto f(x)]\in \End_K(L)$. If we define the product of linearized polynomials to be composition, and identify $X^{\sigma^n}$ with $X$, the we get a ring isomorphism (see for example \cite{WuLiu}, \cite{Lidl}):
\[
\cL(L,K,\sigma)\simeq \End_K(L).
\]
Note that from this correspondence, we may say that a code is $L$-linear if the corresponding set of linearized polynomials is closed under multiplication by $L$. We can also move easily from linearized polynomials to vectors in $L^m$, simply by evaluating a polynomial $f$ on a set of elements $\{\alpha_1,\ldots,\alpha_m\}$ of $L$, linearly independent over $K$.
\[
f \mapsto (f(\alpha_1),\ldots,f(\alpha_m))\in L^m.
\]
Note also that if $K'$ is a subfield of $K$, and $K$ is a Galois extension of $K'$ with $\Aut(L:K')= \langle\rho\rangle$, then $\cL(L,K,\sigma)$ is naturally contained in $\cL(L,K',\rho)$, where $\sigma = \rho^{[K:K']}$.

We denote codes 
\[
\G_{k,\sigma} := \{ f_0X+f_1X^{\sigma}+\cdots+f_{k-1}X^{\sigma^{k-1}}:f_i \in \Fqn \}\subset \End_{K}(L).
\]
Delsarte \cite{Delsarte1978} showed that the set $\G_{k,\sigma_q} $ defines an MRD code for any $k$; that is, the set
\[
\G_{k,\sigma_q} := \{ f_0X+f_1X^{q}+\cdots+f_{k-1}X^{q^{k-1}}:f_i \in \Fqn \}\subset \End_{\Fq}(\Fqn).
\]
This can be easily seen: $\G_{k,\sigma_q}$ is clearly an $\Fq$-subspace of dimension $nk$, and as the number of zeroes of a polynomial in this set is at most $q^{k-1}$, then the rank of any element as an endomorphism of $\Fqn$ is at most $n-k+1$. Hence $\G_{k,\sigma_q}$ is clearly an $[n\times n,nk,n-k+1]$ MRD code. From this it is simple to construct $[m\times n,nk,n-k+1]$ MRD codes for any $m\leq n$. These are the {\it Gabidulin codes}. The codes $\G_{k,\sigma_q^s}$ were shown to be MRD in \cite{Roth1991}, and later independently in \cite{GaKs2005}, and are known as Generalised Gabidulin codes. The sets $\G_{k,\sigma}$ were shown to be MRD codes for any cyclic Galois extension $L$ in \cite{GoQu2009b}. Various properties of these codes have been studied in \cite{TrZh}.

In \cite{SheekeyMRD}, the following sets were introduced.
\[
\HH_{k,\sigma}(\eta,\rho) := \{ f_0X+f_1X^{\sigma}+\cdots+f_{k-1}X^{\sigma^{k-1}}+\eta f_0^{\rho}X^{\sigma^k}:f_i \in L \}\subset \End_K(L)
\]
Taking $L=\Fqn$, $X^{\sigma}=X^q$, $\eta=0$, returns the Gabidulin codes $\G_k$, while taking $L=\Fqn$, $X^{\sigma}=X^{q^s}$, $\eta=0$ returns the generalised Gabidulin codes of \cite{GaKs2005}.

In \cite{SheekeyMRD} it was shown that for $x^{\sigma}=x^q$, and $a^{\rho}=a^{\sigma^h}$, $\HH_k(\eta,\rho)$ is an MRD-code whenever $N(\eta)\ne (-1)^{nk}$. In \cite{LuTrZh2015} this was extended by allowing $\sigma$ to be any $\Fq$-automorphism of $\Fqn$, while in \cite{Ozbudak1} this was extended by allowing $\rho$ to be any automorphism of $\Fqn$, not necessarily fixing $\Fq$.

In the Section \ref{sec:const} we will generalise this further by looking instead at {\it skew polynomial rings}.

In \cite{TrZhHughes} a family of MRD codes was constructed in the case $n$ even by choosing $f_0=a,f_k=\eta b$ for $a,b\in \FF_{q^{n/2}}$, where $N(\eta)$ is a non-square in $\Fq$.

Other known constructions for MRD codes are: non-linear codes (\cite{CoMaPa}, \cite{DuSi}); rectangular matrices \cite{HoMa}; minimum distance $n-1$ \cite{Csaj}. In this paper we will be concerned only with linear codes of square matrices, and for general minimum distances. 

%

\subsection{Nuclei of semifields and equivalence of spread sets}
\label{ssec:semi}

The {\it left-, middle-, and right-nucleus} of a semifield are three subsets defined as follows.
\begin{align*}
{\NN}_l({\SSS})&:=\{a \in {L} ~|~ a \circ (b\circ c)=(a\circ b)\circ c, ~\forall b,c \in {L}\}, \\
{\NN}_m({\SSS})&:=\{b \in {L} ~|~  a \circ (b\circ c)=(a\circ b)\circ c, ~\forall a,c \in {L}\}, \\
{\NN}_r({\SSS})&:=\{c \in {L} ~|~  a \circ (b\circ c)=(a\circ b)\circ c, ~\forall b,c \in {L}\}.
\end{align*}
The {\it nucleus} $\NN(\SSS)$ of $\SSS$ is the intersection of these three sets, and the {\it centre} $Z(\SSS)$ is defined as
\[
Z(\SSS) := \{a \in \NN(\SSS)~|~a\circ b = b \circ a ~\forall b\in L\}.
\]
Each of the nuclei are division rings, and the centre is a field. When $\SSS$ is finite, the nuclei are also fields. The centre is the largest field over which $\SSS$ is an algebra. Note that this definition does not extend to presemifields.

From a semifield $\SSS=(L,\circ)$ we have seen that we can obtain an MRD code in $\End_K(L)$. In fact we can obtain an MRD code in a smaller space of endomorphism, as follows. We can view $\SSS$ as a right-vector space over its right nucleus, and each $M_a$ is an $\NN_r(\SSS)$-endomorphism of $\SSS$. Then we define the set $\C(\SSS)$ as follows.
\[
\C(\SSS) := \{M_a:a \in \SSS\}\subseteq \End_{\NN_r(\SSS)}(\SSS)\subseteq \End_{Z(\SSS)}(\SSS).
\] 
This is also referred to as the {\it spread set} of the semifield $\SSS$. Similar to Section \ref{ssec:semifield}, we have the following well-known fact \cite{LaPo2011}.

\begin{proposition}
If $\SSS$ is a semifield, then $\C(\SSS)\subset \End_{\NN_r(\SSS)}(L)$ is a $Z(\SSS)$-linear MRD code with minimum distance $\dim_{\NN_r(\SSS)}(L)$.
\end{proposition}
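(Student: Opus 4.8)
\emph{Proof proposal.} The plan is to unwind the definitions of the nuclei and the centre and verify the three ingredients in turn: that $\C(\SSS)$ consists of $\NN_r(\SSS)$-endomorphisms, that it is a $Z(\SSS)$-subspace of the expected dimension, and that its minimum distance equals $\dim_{\NN_r(\SSS)}(L)$; the assertion that $\C(\SSS)$ is MRD then follows by checking that the Singleton-like bound recorded above is met with equality. Throughout write $N=\NN_r(\SSS)$ and $Z=Z(\SSS)$, so that $Z\subseteq N$ (the centre lies in every nucleus) and, $\SSS$ being finite, both are fields; set $r=\dim_N(\SSS)$ and $t=[N:Z]$.

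First I would observe that $\SSS$ is a right vector space over $N$ under $b\cdot c:=b\circ c$ for $c\in N$: associativity of this scalar action is exactly the defining identity $a\circ(b\circ c)=(a\circ b)\circ c$ of the right nucleus. For fixed $a$, the map $M_a\colon b\mapsto a\circ b$ is additive and, by the same identity applied with $c\in N$, satisfies $M_a(b\cdot c)=a\circ(b\circ c)=(a\circ b)\circ c=M_a(b)\cdot c$; hence $M_a\in\End_N(\SSS)$, and $\C(\SSS)\subseteq\End_N(\SSS)\simeq M_{r\times r}(N)$.

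Next, for the $Z$-linear structure: the assignment $a\mapsto M_a$ is additive and injective (the latter since $\SSS$ has an identity $e$ with $M_a(e)=a\circ e=a$), so $|\C(\SSS)|=|\SSS|=|N|^{\dim_N\SSS}=|Z|^{tr}$. To see that $\C(\SSS)$ is closed under the $Z$-action on $\End_N(\SSS)$ (which is well defined because $N$ is commutative and $Z\subseteq N$), I would check that $M_{\lambda\circ a}=\lambda\,M_a$ for $\lambda\in Z$: using $\lambda\in\NN_l(\SSS)$ and centrality of $\lambda$ one computes $M_{\lambda\circ a}(b)=(\lambda\circ a)\circ b=\lambda\circ(a\circ b)=(a\circ b)\circ\lambda=M_a(b)\cdot\lambda$, which is precisely the scalar multiple $\lambda\,M_a$. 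Hence $\C(\SSS)$ is a $Z$-subspace of $\End_N(\SSS)$, necessarily of dimension $tr$.

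Finally, for the minimum distance: if $a\neq a'$ then $M_a-M_{a'}=M_{a-a'}$ with $a-a'\neq 0$, and since $\SSS$ is a division algebra $M_{a-a'}$ is bijective, hence invertible in $\End_N(\SSS)$, so $\rank(M_a-M_{a'})=r=\dim_N(\SSS)$; thus the minimum distance is exactly $d=r$. Applying the Singleton-like bound to the $Z$-linear code $\C(\SSS)\subseteq M_{r\times r}(N)$ gives $\dim_Z\C(\SSS)\le r(r-d+1)[N:Z]=rt$, and since we computed $\dim_Z\C(\SSS)=rt$ the bound is attained, so $\C(\SSS)$ is an MRD code of minimum distance $\dim_{\NN_r(\SSS)}(L)$. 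The only real care needed is bookkeeping of the module structures — tracking which nuclear identity licenses which instance of associativity, noting that the left and right $Z$-actions on $\SSS$ coincide by centrality so that ``$Z$-linear'' is unambiguous, and observing that passing from $\End_K(L)$ to the smaller $\End_N(\SSS)$ does not affect the rank of a bijective map — rather than any substantive difficulty.
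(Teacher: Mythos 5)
Your proposal is correct. The paper itself gives no proof of this proposition --- it is stated as a well-known fact with a citation to the Lavrauw--Polverino survey --- and your verification is exactly the standard argument: $M_a$ is right-$\NN_r$-linear by the defining identity of the right nucleus, $M_{\lambda\circ a}=\lambda M_a$ gives $Z$-linearity, invertibility of $M_{a-a'}$ gives minimum distance $\dim_{\NN_r(\SSS)}(L)$, and counting against the Singleton-like bound shows the code is MRD. The bookkeeping you flag (right vector-space structure over $\NN_r$, commutativity of $\NN_r$ making the scalar action on $\End_{\NN_r}(\SSS)$ well defined) is handled correctly.
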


Furthermore, isotopy of semifields translates into an equivalence of spread sets as follows (paraphrased from \cite[Theorem 7]{LaPo2011}):
\begin{proposition}
Two semifields $\SSS$ and $\SSS'$ with $\NN_r(\SSS) = \NN_r(\SSS')$ are isotopic if and only if there exist $A,B\in \End_{\NN_r(\SSS)}(L)$ invertible, and $\rho \in \Aut(\NN_r(\SSS))$, such that
\[
\C(\SSS) = \{A X^{\rho}B:X\in \C(\SSS')\}.
\]
%
%
\end{proposition}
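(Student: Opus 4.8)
The plan is to prove both directions of the equivalence by translating statements about the semifield multiplication $\circ$ into statements about the spread set $\C(\SSS)$ of left-multiplication maps, using the fact (already established) that $M_{\lambda a+b}=\lambda M_a + M_b$ for $\lambda\in K$ and that each $M_a$ is an $\NN_r(\SSS)$-endomorphism of $L$. For the ``only if'' direction, suppose $\SSS$ and $\SSS'$ are isotopic via invertible additive maps $A,B,C\colon L\to L$ with $(a\circ b)^A = a^B \circ' b^C$ for all $a,b$. Rewriting the left-hand side as $M_a(b)^A = (A\circ M_a)(b)$ and the right-hand side as $M'_{a^B}(b^C) = (M'_{a^B}\circ C)(b)$, we get $A M_a C^{-1} = M'_{a^B}$ as endomorphisms of $L$, so that $\C(\SSS') = \{ A X C^{-1} : X \in \C(\SSS)\}$. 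It then remains to show that when the right nuclei coincide, $A$, $C$ and the induced action can be taken to be $\NN_r$-semilinear rather than merely $K$-additive; this is where the hypothesis $\NN_r(\SSS)=\NN_r(\SSS')$ is used. One argues that $C$ must be $\rho$-semilinear over $\NN_r$ for some $\rho\in\Aut(\NN_r)$ because $\C(\SSS)$ consists precisely of the $\NN_r$-endomorphisms of a fixed form and conjugation by $C$ must send $\NN_r$-endomorphisms to $\NN_r$-endomorphisms; absorbing $C$'s semilinear part and setting $B$ appropriately yields the stated form $\C(\SSS)=\{A X^{\rho} B : X\in\C(\SSS')\}$ with $A,B\in\End_{\NN_r}(L)$ invertible.

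For the converse, suppose $\C(\SSS) = \{ A X^{\rho} B : X \in \C(\SSS')\}$ with $A,B$ invertible $\NN_r$-endomorphisms and $\rho\in\Aut(\NN_r(\SSS))$. I would define the candidate isotopy maps by unwinding this identity: given $a\in L$, there is a unique $a'\in L$ with $M_a = A (M'_{a'})^{\rho} B$, and one checks $a\mapsto a'$ is additive and invertible; the maps $A^{-1}$ (or rather a suitable adjustment of it) and $B^{-1}$ on the target side supply the other two legs of the isotopy. Evaluating $M_a(b) = A(M'_{a'})^{\rho}B(b)$ and massaging the $\rho$-twist through — using that $\rho$ acts on scalars in $\NN_r$ and that every $M'_{a'}$ is $\NN_r$-linear so $(M'_{a'})^{\rho}$ is again of the form $M'_{(a')^{\tau}}$ for the corresponding twisted element — one recovers an identity of the shape $(a\circ b)^{A'} = a^{B'}\circ' b^{C'}$, exhibiting the isotopy.

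The main obstacle is the bookkeeping around the automorphism $\rho$ and the interplay between $K$-additivity and $\NN_r$-semilinearity: one must be careful that ``$X^\rho$'' means applying $\rho$ to the matrix entries relative to an $\NN_r$-basis, show this operation is well-defined independent of the basis choice up to the equivalences already in play, and verify that the composite of a $\rho$-twist with a genuine semifield left-multiplication map is again (a scalar multiple of) a left-multiplication map for a related element — this is essentially the statement that the spread set is closed under the $\rho$-twist up to the $A,B$ conjugation, which is exactly what the hypothesis gives us. I expect the cleanest route is to cite \cite[Theorem 7]{LaPo2011} for the precise dictionary between isotopy and spread-set equivalence in the $K$-linear setting and then do the short extra argument reducing from $\End_K$ to $\End_{\NN_r}$ using the coincidence of right nuclei; the remainder is routine verification that composed additive maps are additive and invertible.
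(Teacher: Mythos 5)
The paper offers no proof of this proposition: it is stated as a paraphrase of \cite[Theorem 7]{LaPo2011}, so there is no in-paper argument to compare yours against. Your sketch follows the standard proof of the cited result and the overall strategy is sound: the forward direction reads $M'_{a^B}=A\,M_a\,C^{-1}$ off the isotopy identity, and the converse unwinds the spread-set identity into three invertible additive maps (note you do not actually need $(M'_{a'})^{\rho}$ to be a left-multiplication map again; the three additive bijections suffice). The one step you should tighten is the claim that $C$ is $\rho$-semilinear over $\NN_r$: the observation that conjugation by $C$ ``sends $\NN_r$-endomorphisms to $\NN_r$-endomorphisms'' does not by itself force semilinearity. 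The clean route, available from Propositions \ref{prop:idequiv} and \ref{prop:nuc}(iii) of the paper, is that the centraliser of a semifield spread set is exactly $\{R_a : a\in \NN_r\}$, and equivalence of spread sets conjugates centralisers; hence $C R_a C^{-1}=R_{a^{\rho}}$ for some $\rho\in\Aut(\NN_r)$, which is precisely the statement that $C$ is $\rho$-semilinear for the right-$\NN_r$-module structure. The identity $A=M'_{a^B}\,C\,M_a^{-1}$ (for any fixed invertible $M_a$) then shows $A$ is $\rho$-semilinear for the same $\rho$, and factoring a fixed extension of $\rho$ to $L$ out of both $A$ and $C$ leaves invertible $\NN_r$-linear maps and yields the stated form, with the twist $X^{\rho}(v)=(X(v^{\rho^{-1}}))^{\rho}$ as defined after the proposition. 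With that repair your outline is complete.
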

Here we take $X^{\rho}(v) := (X(v^{\rho^{-1}}))^{\rho}$ for $v\in L$. We will relate the nuclei to sets of endomorphisms in the next subsection.



\subsection{Equivalence, Automorphism Groups, and Idealisers of rank-metric codes}

There are differing definitions of equivalence of rank-metric codes in the literature. We define an action of elements of $\Gamma:= \GL(L,K)\times \GL(L,K)\times \Aut(K)$ on $\End_K(L)$ as follows.
\[
(A,B,\rho):X \mapsto AX^{\rho}B.
\]
If we choose a basis and take a matrix representing $X$, then $\rho$ is applied entry-wise. Then $\Gamma$ forms a group with the product
\[
(A,B,\rho)(C,D,\tau) = (AC^{\rho},D^{\rho}B,\rho\tau).
\]
We denote the subgroup $\GL(L,K)\times \GL(L,K)\times \langle \id\rangle$ by $G$. The group $\Gamma$ is isomorphic to the subgroup of $\GammaL(\End_K(L),K)$ fixing the set of rank one elements; in the projective space this set is referred to as the {\it Segre variety}, and the induced group is the setwise stabiliser of the Segre variety in the collineation group of the projective space. The group $G$ is isomorphic to the subgroup of $\GL(\End_K(L),K)$ fixing the set of rank one elements. We say that two subsets of $\End_K(L)$ are {\it equivalent} if they lie in the same $\Gamma$-orbit, and {\it projectively equivalent} if they lie in the same $G$-orbit.

\begin{definition}
For a code $\C$, its {\it automorphism group} $\Aut(\C)$ is defined as
\[
\Aut(\C):= \{(A,B,\rho):A,B \in \GL(L,K), \rho \in \Aut(K) , A\C^{\rho} B = \C\}.
\]
We define the {\it projective automorphism group} $\Aut_K(\C)$ of $\C$ as
\[
\Aut_K(\C):= \{(A,B):A,B \in \GL(L,K), A\C B = \C\}.
\]
\end{definition}
The automorphism groups of twisted gabidulin codes was calculated in \cite{SheekeyMRD} for $s=0$ and \cite{LiebNebe}, \cite{TrZh} for $s>0$. When $\C=\C(\SSS)$ is the spread set of a semifield, then the automorphism group coincides with the {\it autotopy group} of the semifield.

%
%

In \cite{MarPol}, the nuclei of a semifield were defined in terms of sets of endomorphisms. In \cite{TrZh} and \cite{LiebNebe}, these were extended to all codes. Differing terminology was used in each; here we use slightly different definitions which are more convenient for this paper. 
\begin{definition}
The {\it left idealiser} $\I_l(\C)$ is defined as
\[
\I_{\ell}(\C) = \{A:A \in \End_K(L), A\C\subseteq \C\}
\]
The {\it right idealiser} $\I_r(\C)$ is defined as
\[
\I_r(\C) = \{A:A \in \End_K(L), \C A\subseteq\C\}
\]
The {\it centraliser} $C(\C)$ is defined as
\[
C(\C) = \{A:A \in \End_K(L), AX=XA ~\forall~X\in \C\}.
\]
The {\it centre} $Z(\C)$ of $\C$ is defined as the intersection of the left idealiser and the centraliser.
\[
Z(\C) = \I_{\ell}(\C)\cap C(\C).
\]
\end{definition}

We now show that these objects can be seen as invariants of codes. For the centre and centraliser to be invariants, we need to assume that the identity is contained in each code. However for most codes of interest, for example MRD codes, this is not a major restriction.
\begin{proposition}
\label{prop:idequiv}
Suppose $\C$ and $\C'$ are two equivalent codes. Then there exist invertible endomorphisms $A,B$, and $\rho\in \Aut(K)$, such that 

\begin{align*}
\I_{\ell}(\C) &= (A^{-1}\I_{\ell}(\C')A)^{\rho},\\
\I_r(\C) &= (B\I_{\ell}(\C')B^{-1})^{\rho}.
\end{align*}
Furthermore, if both $\C$ and $\C'$ contain the identity, then 
\begin{align*}
C(\C) &= (A^{-1}C(\C')A)^{\rho},\\
Z(\C) &= (A^{-1}Z(\C')A)^{\rho}.
\end{align*}
If $L$ is a finite field, and if both $\C$ and $\C'$ contain the identity, then
\[
|\I_{\ell}(\C)| = |\I_{\ell}(\C')|;\quad |\I_{r}(\C)| = |\I_{r}(\C')|;\quad |C(\C)| = |C(\C')|;\quad |Z(\C)| = |Z(\C')|.
\]
\end{proposition}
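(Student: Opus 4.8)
The plan is to prove the statement by tracking how each of the four objects transforms under the two generators of the equivalence group $\Gamma$, namely elements of $G = \GL(L,K)\times\GL(L,K)\times\langle\id\rangle$ and the Frobenius-type twists $(\id,\id,\rho)$. Write $\C' = A\C^{\rho}B$ for some $(A,B,\rho)\in\Gamma$; since the displayed equivalences are stated up to conjugation by invertible maps and application of $\rho$, it suffices to verify the two generating cases separately and then compose, because the group law $(A,B,\rho)(C,D,\tau) = (AC^\rho, D^\rho B, \rho\tau)$ shows the composite substitutions are again of the claimed form (the $A$ in the left-idealiser identity and the $B$ in the right-idealiser identity simply accumulate, and the $\rho$'s multiply).

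First I would do the ``pure $G$'' case, $\C' = A\C B$ with $A,B$ invertible. For the left idealiser: $M\in\I_\ell(\C')$ iff $MA\C B\subseteq A\C B$ iff $(A^{-1}MA)\C\subseteq\C$ iff $A^{-1}MA\in\I_\ell(\C)$, giving $\I_\ell(\C) = A^{-1}\I_\ell(\C')A$. Symmetrically, $M\in\I_r(\C')$ iff $A\C BM\subseteq A\C B$ iff $\C(BMB^{-1})\subseteq\C$, so $\I_r(\C) = B\I_r(\C')B^{-1}$ --- note the statement as written has a typo, writing $\I_\ell(\C')$ where it means $\I_r(\C')$; I would state and prove the corrected version. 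For the centraliser and centre, the hypothesis $\id\in\C,\C'$ is what is needed: if $M$ commutes with every element of $\C'$, then since $\id\in\C'$ forces nothing directly, one instead argues that $A\C B$ containing $\id$ gives $B = (A\cdot\id)^{-1}$ only if $\id\in\C$, more carefully: $\C'\ni\id$ and $\C'=A\C B$ so $AB^{-1}\in\C$; combined with $\id\in\C$ one pins down the conjugation so that $C(\C) = A^{-1}C(\C')A$ follows from $MX=XM\ \forall X\in\C'$ rewritten via $X = AYB$. I would write this carefully but it is routine linear algebra once the identity-containment is used to control the ``$B$'' freedom. Then $Z(\C) = \I_\ell(\C)\cap C(\C) = (A^{-1}\I_\ell(\C')A)\cap(A^{-1}C(\C')A) = A^{-1}Z(\C')A$, and applying $(\cdot)^\rho$ handles the twist.

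Second, the ``pure twist'' case $\C' = \C^\rho$: here conjugation is trivial ($A=B=\id$) and one checks directly that $\I_\ell(\C^\rho) = \I_\ell(\C)^\rho$ etc., using $(XY)^\rho = X^\rho Y^\rho$ and the fact that $\rho$ acts as a ring automorphism on $\End_K(L)$ (entry-wise on matrices), so membership conditions like $A\C\subseteq\C$ transport verbatim. Finally, for the cardinality statement when $L$ is finite: each of the four identities exhibits $\I_\ell(\C)$, $\I_r(\C)$, $C(\C)$, $Z(\C)$ as the image of the corresponding object for $\C'$ under a bijection of $\End_K(L)$ (conjugation composed with $\rho$), hence they have equal size. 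I expect the main obstacle to be the bookkeeping in the centraliser/centre case: one must be genuinely careful about which invertible map appears (it must be the same $A$ throughout, not an independent one), and this is exactly where the identity-containment hypothesis earns its keep --- without it, $\C'=A\C B$ admits a two-parameter family of presentations and the centraliser is not well-defined up to conjugacy. Making precise why $\id\in\C\cap\C'$ rigidifies the presentation enough to force a single conjugating element is the crux, and I would isolate it as a short lemma before assembling the four identities.
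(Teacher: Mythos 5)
Your proposal is correct in outline and follows essentially the same route as the paper: a direct computation of how each of the four objects transforms under $X\mapsto AX^{\rho}B$, with the identity-containment hypothesis invoked exactly once, in the centraliser step. (The paper does not split the action into the $G$-part and the twist part; it substitutes a general group element in one go. Your decomposition via the group law is an equivalent piece of bookkeeping. Your observation that the right-idealiser identity should read $\I_r(\C')$ rather than $\I_{\ell}(\C')$ is correct; that is a typo in the statement.)

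The one place where your sketch, taken literally, would not go through is the mechanism you propose for the centraliser. From $\id\in\C'=A\C B$ one can only conclude $A^{-1}B^{-1}\in\C$ (not $AB^{-1}\in\C$, and certainly not $B=(A\cdot\id)^{-1}$); this does not rigidify the presentation, and the pair $(A,B)$ retains its full freedom. The hypothesis that actually does the work is $\id\in\C$, used as follows: rewriting $MX'=X'M$ for $X'=AXB$ with $X\in\C$ gives $(A^{-1}MA)X=X(BMB^{-1})$ for all $X\in\C$; substituting $X=\id$ forces $A^{-1}MA=BMB^{-1}$, whence $A^{-1}MA$ commutes with every element of $\C$ and $C(\C)\supseteq A^{-1}C(\C')A$, the reverse containment following by symmetry. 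So the lemma you propose to isolate is a one-line substitution into the transformed commutation relation, not a statement about the presentation $\C'=A\C B$ being rigid. With that repair, the rest of your argument (intersecting with the left idealiser for $Z$, and bijectivity of conjugation composed with $\rho$ for the cardinality claims) goes through as you describe.
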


\begin{proof}
Let $\C' = A\C^{\rho^{-1}}B$ for invertible $A,B$, and $\rho\in \Aut(K)$. Then 
\[
D \in \I_{\ell}(\C')\Leftrightarrow D\C' = \C' \Leftrightarrow DA\C^{\rho^{-1}}B = A\C^{\rho^{-1}}B\Leftrightarrow (A^{-\rho}D^{\rho}A^{\rho})\C = \C \Leftrightarrow A^{-\rho}D^{\rho}A^{\rho}\in \I_{\ell}(\C).
\]
The proof for the right idealiser is similar.

Now suppose $D\in C(\C')$. Then 
\[
DAX^{\rho^{-1}}B = AX^{\rho^{-1}}BD
\]
for all $X\in \C$. Hence
\[
(A^{-\rho}D^{\rho}A^{\rho})X = X(B^{\rho}D^{\rho}B^{-\rho})
\]
for all $X\in \C$. Since the identity is in $\C$, we have $A^{-\rho}D^{\rho}A^{\rho}=B^{\rho}D^{\rho}B^{-\rho}$, and so $A^{-\rho}D^{\rho}A^{\rho}\in C(\C)$, implying that $Z(\C) \supseteq (A^{-1}Z(\C')A)^{\rho}$. Repeating this argument interchanging $\C$ and $\C'$ completes the proof.
\end{proof}

These sets are a useful generalisation of the nuclei and centre of a semifield, as we now demonstrate. 

For a semifield $\SSS$ with left-, middle-, and right-nuclei $\NN_l,\NN_m$, and $\NN_r$ respectively, and centre $Z$, we note the following isomorphisms with sets of endomorphisms, which by abuse of terminology we will also refer to as the left-nucleus etc.
\begin{align*}
\NN_l &\simeq \{M_a:a \in \NN_l\} \subseteq \End_K(L)\\
\NN_m &\simeq \{M_a:a \in \NN_m\} \subseteq \End_K(L)\\
\NN_r &\simeq \{R_a:a \in \NN_r\} \subseteq \End_K(L)\\
Z &\simeq \{R_a:a \in Z\} = \{M_a:a \in Z\} \subseteq \End_K(L)\\
\end{align*}
As $\SSS$ is a semifield, we have that that the identity and zero endomorphisms are in each of these sets. In the next proposition we relate the nuclei to the idealisers and centraliser; much of this is taken from \cite{MarPol}, and translated into the terminology of this paper. However here we use a different characterisation of the right nucleus, due to the fact that the notion of the {\it dual} of a semifield does not generalise to all codes.

\begin{proposition} \label{prop:nuc} Let $\SSS$ be a semifield, with nuclei as above, and $\C=\C(\SSS)$ the spread set of $\SSS$.
\begin{itemize}
\item[(i)]The left nucleus $\NN_l$ is isomorphic to  $\I_l(\C)$.
\item[(ii)]The middle nucleus $\NN_m$ is isomorphic to  $\I_r(\C)$.
\item[(iii)]The right nucleus $\NN_r$ is isomorphic to $C(\C)$ .
\item[(iv)] The centre $Z$ is isomorphic to $Z(\C)$.
\end{itemize}
\end{proposition}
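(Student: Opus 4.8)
The plan is to reduce everything to two composition identities in $\End_K(L)$: that $a\in\NN_l$ is equivalent to $M_aM_b=M_{a\circ b}$ holding for all $b$, and that $b\in\NN_m$ is equivalent to the same identity holding for all $a$ (both by evaluating $(M_aM_b)(c)=a\circ(b\circ c)$ against $M_{a\circ b}(c)=(a\circ b)\circ c$); and that $c\in\NN_r$ is equivalent to $M_aR_c=R_cM_a$ for all $a$ (by evaluating $(M_aR_c)(b)=a\circ(b\circ c)$ against $(R_cM_a)(b)=(a\circ b)\circ c$). The other ingredient, used everywhere, is that $\id=M_1\in\C$, which lets one recover a multiplication map from an abstract idealiser or centraliser element.

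For (i): if $a\in\NN_l$ then $M_a\C\subseteq\C$ by the first identity, so $M_a\in\I_l(\C)$. Conversely, if $A\in\I_l(\C)$ then $A=A\,\id\in A\C\subseteq\C$, so $A=M_a$ for some $a\in L$; then $M_aM_b\in\C$ for every $b$, and evaluating at $1$ (using $M_b(1)=b$) forces $M_aM_b=M_{a\circ b}$, hence $a\in\NN_l$. Part (ii) is the same with the roles of $a$ and $b$ swapped: $b\in\NN_m$ says exactly $\C M_b\subseteq\C$, and conversely $A\in\I_r(\C)$ gives $A=\id\,A\in\C$, so $A=M_b$, and $M_aM_b\in\C$ for all $a$ forces $b\in\NN_m$.

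For (iii): the computation above gives $\{R_c:c\in\NN_r\}\subseteq C(\C)$. Conversely, given $A\in C(\C)$, put $c:=A(1)$; then for any $b$, writing $b=M_b(1)$ and using $AM_b=M_bA$, we get $A(b)=(AM_b)(1)=(M_bA)(1)=M_b(c)=b\circ c=R_c(b)$, so $A=R_c$, and $R_cM_a=M_aR_c$ for all $a$ then gives $c\in\NN_r$. For (iv), intersect: $Z(\C)=\I_l(\C)\cap C(\C)=\{M_a:a\in\NN_l\}\cap\{R_c:c\in\NN_r\}$; an element of the intersection is some $M_a=R_c$, and evaluating at $1$ yields $a=c$, so it is $M_a=R_a$ with $a\in\NN_l\cap\NN_r$ and $a$ commuting with everything ($a\circ b=M_a(b)=R_a(b)=b\circ a$). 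One then checks $a\in\NN_m$ as well, so $a\in\NN(\SSS)$ and hence $a\in Z(\SSS)$; conversely, for $a\in Z(\SSS)$ we have $M_a=R_a$ with $a\in\NN_l$ (so $M_a\in\I_l(\C)$ by (i)) and $a\in\NN_r$ (so $M_a=R_a\in C(\C)$ by (iii)), i.e.\ $M_a\in Z(\C)$. All these bijections are additive and preserve composition, hence are ring isomorphisms (field isomorphisms in the finite case).

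I do not expect a serious obstacle: once the composition identities are isolated, the argument is bookkeeping, the recurring trick being to feed $\id\in\C$ or the vector $1\in L$ into the relevant relation. The one spot needing genuine care is the claim in (iv) that an element of $\NN_l\cap\NN_r$ commuting with everything automatically lies in $\NN_m$ — in general the three nuclei of a semifield are distinct, so this is the only place where the three associativity conditions interact. It comes out by a short chain: from commutativity of $a$ and $a\in\NN_r$, $x\circ(a\circ y)=x\circ(y\circ a)=(x\circ y)\circ a$; from commutativity and $a\in\NN_l$, $(x\circ a)\circ y=(a\circ x)\circ y=a\circ(x\circ y)=(x\circ y)\circ a$; equating the two gives $a\in\NN_m$.
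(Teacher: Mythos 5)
Your proof is correct and follows essentially the same route as the paper: identify $\I_l(\C)$ and $\I_r(\C)$ with $\{M_a:a\in\NN_l\}$ and $\{M_b:b\in\NN_m\}$ via the identity $M_aM_b=M_{a\circ b}$ and the fact that $\id\in\C$, show $C(\C)=\{R_c:c\in\NN_r\}$ by setting $c=A(e)$, and intersect for the centre. The only (harmless) divergence is in (iv), where you deduce $a\in\NN_m$ by a direct associativity chain, while the paper gets it by noting $M_a\in\I_r(\C)$; both are valid.
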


\begin{proof} 
(i) and (ii) are adapted from \cite{MarPol}. Since the identity is contained in $\C(\SSS)$, we have that $\I_l(\SSS)\subseteq \C(\SSS)$. It holds that $\I_l(\SSS)=\{M_a:a \in \NN_l\}$ since $a\in \NN_l$ if and only if $M_aM_b = M_{a\circ b}$ for any $b\in \SSS$. Similarly we have that $\I_r(\SSS)=\{M_a:a \in \NN_m\}$.
%
%
%

(iii) We show that $\{R_a:a \in \NN_r(\SSS)\}$ is equal to the centraliser of $\C$. First note that $R_aM_b(x) = (b\circ x)\circ a = b\circ(x\circ a) = M_bR_a(x)$ for all $x$, and so $R_aM_b=M_bR_a$ for all $b$, implying that $R_a$ is in the centraliser of $\C$.

Now suppose $AM_b=M_bA$ for all $b\in \SSS$. Since $\SSS$ is a semifield, it has an identity element $e$. Let $a = A(e)$. Then $R_a(b) = b\circ a = M_b(a) = M_bA(e)=AM_b(e) = A(b\circ e) = A(b)$ for all $b$, and hence $A = R_a$.

(iv) If $a\in Z(\SSS)$, then $M_a=R_a$, and $M_aM_b=M_bM_a$ for all $b\in \SSS$, and so $M_a\in  \I_{\ell}(\C)\cap C(\C)$. Conversely, if $A\in \I_{\ell}(\C)\cap C(\C)$, then $A = M_a$ for some $a\in \NN_l$, and $M_aM_b=M_bM_a$ for all $b\in \SSS$. Therefore $a\circ b = M_aM_b(e) = M_bM_a(e) = b\circ a$ for all $b\in \SSS$, and so $M_a=R_a$, implying $a\in \NN_r$. Furthermore, $M_a\in \I_r(\SSS)$, and so $M_a=R_c$ for some $c\in \SSS$. Thus $a= M_a(e) = R_c(e)= c$, and so $a\in \NN_m$. Therefore $a$ is in the intersection of the three nuclei, and commutes with every element of $\SSS$, and hence $a\in Z(\SSS)$. Therefore $\I_{\ell}(\C)\cap C(\C) = \{M_a:a\in Z(\SSS)\}$, completing the proof.
\end{proof}

\begin{remark}
Note that in the above proposition we require $\SSS$ to be a semifield, rather than a presemifield. If this assumption is removed, then (iii) is no longer necessarily true. This is the same reason that we restrict to codes containing the identity in Proposition \ref{prop:idequiv}. 
%
%
\end{remark}

%

\begin{definition}
The {\it nuclear parameters} of a finite semifield $\SSS$ are defined as the tuple
\[
(|\SSS|,|\NN_\ell|,|\NN_m|,|\NN_r|,|Z|).
\]
By the previous discussion, we may extend this definition to all codes containing the identity, by defining the nuclear parameters of a code $\C$ by 
\[
(|\C|,|\I_{\ell}(\C)|,|\I_{\ell}(\C)|,|C(\C)|,|Z(\C)|).
\]
If a code $\C$ contains an invertible element but does not contain the identity, we define its nuclear parameters to be the nuclear parameters of any code $\C'$ which contains the identity and is equivalent to $\C$.

For codes over infinite fields, if $Z(\C)$ is a field then, setting $K=Z(\C)$, we can also define the nuclear parameters as follows:
\[
(\dim_K(\C),\dim_K(\I_\ell(\C)),\dim_K(\I_r(\C)),\dim_K(C(\C))).
\]
\end{definition}

\begin{remark}
Each of the idealisers, centraliser, and centre of $\C$ form a subring of $\End_K(L)$. If $\C$ is a semifield spread set then each form a division ring, and if $\C$ is a {\it finite} semifield spread set then each form a field.

If $\C$ contains the identity, then $Z$ is a commutative subring. If $\C$ is a semifield spread set then $Z$ is a field.

Note that if $\C$ is a commutative subring of $\End_K(L)$, then all of these sets coincide. 
\end{remark}

It is easy to see that idealisers and centraliser are related to subgroups of the automorphism group;
\[
\I_l(\C) \supseteq \{A : (A,I)\in \Aut(\C)\}\cup \{(0,I)\};
\]
\[
\I_r(\C) \supseteq \{A : (I,A)\in \Aut(\C)\}\cup \{(I,0)\};
\]
\[
C(\C) \supseteq \{A : (A,A^{-1})\in \Aut(\C)\}\cup\{(0,0)\}.
\]
In the case that $\C$ contains the identity and such that all non-zero elements of $\C$ are invertible, these become equalities.

\subsection{Nuclear parameters of known constructions for semifields and MRD codes}

Here we collect information on the nuclear parameters of the relevant known constructions. We omit constructions which only occur in certain characteristics. We also omit construction which are two-dimensional over a nucleus, as there are a vast array of such constructions.

The nuclei of Generalised Twisted Fields were calculated by Albert \cite{Albert1961}: these are semifields with multiplication
\[
a\circ b = ab-\eta a^{p^i}b^{p^j},\quad x,y\in \FF_{p^{ne}},
\]
which have nuclear parameters
\[
(p^{ne},p^{(ne,i)},p^{(ne,j-i)},p^{(ne,j)},p^{(ne,i,j)}).
\]

The Pott-Zhou commutative semifields \cite{PottZhou}, which are defined for integers $n,i,j$ such that $\frac{n}{(n,i)}$ is odd and $j\leq n$, have nuclear parameters
\[
(p^{2n},p^{(n,i,j)},p^{(n,i)},p^{(n,i,j)},p^{(n,i,j)}).
\]

The Petit (or cyclic) semifields (see \cite{Petit}, \cite{LaShSkew}), which we will consider in the next section, have nuclear parameters
\[
(q^{ns},q^{n},q^n,q^s,q),
\]
where $q$ is a power of the prime $p$.

Cyclic semifields were generalised in \cite{JOMAPOTR2009}, but specific to the case of two-dimensional over a nucleus.


\section{Skew Polynomial Rings}\label{sec:skew}
In this section we recall basic properties of skew polynomial rings, review the construction of semifields due to Petit, and prove some important facts about skew polynomials which will allow us to extend this construction.

\label{sec:skew}

The {\it skew polynomial ring}  $L[x;\sigma,\delta]$ is a ring where
\begin{itemize}
\item
$\delta$ is a {\it $\sigma$-derivation} of $L$; that is, an additive map satisfying $(ab)^{\delta} = a^\sigma b^\delta+a^\delta b$;
\item
the elements of $L[x;\sigma,\delta]$ are polynomials in the indeterminate $x$; addition is polynomial addition;
\item
multiplication is $K$-bilinear, and satisfies $xa = a^{\sigma}x+a^{\delta}$ for all $a\in L$.
\end{itemize}

In this article we will restrict ourselves to rings $L[x;\sigma,0]=:L[x;\sigma]$, i.e. rings with trivial derivation. Such rings are sometimes called {\it twisted polynomial rings}. In the case where $L$ is finite, this is no loss of generality, as every skew polynomial ring is isomorphic to a twisted polynomial ring (see e.g. \cite{JacobsonBook}, \cite{SheekeyThesis}).

The study of these rings dates back to Ore \cite{Ore1933}, who showed the following properties.
\begin{theorem}
\label{thm:skewprop}
Let $R=L[x;\sigma]$, where $\sigma$ is not the identity automorphism. Then
\begin{enumerate}
\item[(i)]
$R$ is a non-commutative integral domain;
\item[(ii)]
$R$ is {\it not} a unique factorisation domain;
\item[(iii)]
$R$ is both {\it left- and right-Euclidean domain}, with the usual degree function;
\item[(iv)]
the centre of $R$ is $K[x^n;\sigma]\simeq K[y]$, where $K$ is the fixed field of $\sigma$ and $n$ is the order of $\sigma$;
\item[(v)]
$R$ is a {\it Principal Ideal Domain}, with prime two-sided ideals generated by elements of the form $F(x^n)$, where $n$ is the order of $\sigma$, and $F$ is an irreducible element of $K[y]$;
\item[(vi)]
if $f$ has factorisations $f=g_1\cdots g_k=h_1\cdots h_m$, then $k=m$ and there is a permutation $\pi\in S_k$ such that $\deg(g_{\pi(i)}) = \deg(h_i)$ for all $i$;
\item[(vii)]
if $K$ is finite and $F$ is irreducible in $K[y]$, then every irreducible right divisor of $F(x^n)$ in $R$ has degree equal to $\deg(F)$, and every right divisor of $F(x^n)$ in $R$ has degree equal to $k\deg(F)$ for some $k\in \{1,\ldots,n\}$.
\end{enumerate}
\end{theorem}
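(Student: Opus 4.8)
The plan is to prove Theorem~\ref{thm:skewprop} in the order $(i),(iii),(i\!i),(iv),(v),(vi),(vii)$, since the Euclidean property is the workhorse for almost everything else. For $(i)$, I would argue directly from the multiplication rule $xa=a^\sigma x$: if $f,g$ are nonzero of degrees $d,e$ with leading coefficients $a,b$, then the leading term of $fg$ is $a\,b^{\sigma^d}x^{d+e}$, which is nonzero because $\sigma$ is an automorphism; hence $R$ has no zero divisors and $\deg(fg)=\deg f+\deg g$. Non-commutativity is witnessed by $xa\ne ax$ for any $a$ with $a^\sigma\ne a$, which exists since $\sigma\ne\mathrm{id}$. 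For $(iii)$, the right division algorithm is the usual one: given $f,g$ with $g\ne0$, repeatedly subtract a suitable left multiple $c x^{k} g$ to cancel the top term of $f$; the key point is that to kill a term $a x^{d}$ using $g$ of degree $e$ with leading coefficient $b$, one left-multiplies $g$ by $a\,(b^{\sigma^{d-e}})^{-1}x^{d-e}$, again using that $\sigma$ is bijective. Left division is symmetric (multiply $g$ on the left instead). From $(iii)$ every left ideal and every right ideal is principal, generated by a minimal-degree element (by a standard descent on degree), which is most of what is needed later.

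For $(iv)$, I would show $z=\sum c_i x^i$ is central iff $zx=xz$ and $za=az$ for all $a\in L$. The relation $za=az$ forces $c_i a=c_i a^{\sigma^i}$ for all $a$, hence $c_i=0$ unless $\sigma^i=\mathrm{id}$, i.e.\ unless $n\mid i$; and commuting with $x$ then forces $c_i\in K$. So the centre is $\{\sum_j d_j x^{nj}: d_j\in K\}=K[x^n;\sigma]$, which is a genuine (commutative) polynomial ring in the central variable $y:=x^n$, giving the isomorphism $K[x^n;\sigma]\simeq K[y]$. For $(v)$: two-sided ideals correspond to ideals of the centre that are ``stable'' in the appropriate sense; concretely, a two-sided ideal $I$ is in particular a left ideal, so $I=Rf$ for a monic $f$ of minimal degree, and two-sidedness ($fR\subseteq Rf$, equivalently $xf\in Rf$ and $af\in Rf$) forces $f$ to lie in the centre $K[x^n;\sigma]$, so $f=F(x^n)$ for some $F\in K[y]$; primeness of $I$ in $R$ then matches irreducibility of $F$ in $K[y]$ via the centre. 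This needs a small lemma that $R/(F(x^n))$ has no two-sided ideals when $F$ is irreducible, which follows because any such ideal pulls back to a two-sided ideal of $R$ strictly between $(F(x^n))$ and $R$, contradicting the minimal-degree/centre argument; I would cite Ore for the fine print rather than belabour it. For $(ii)$, it suffices to exhibit a single polynomial with two factorizations into irreducibles of different ``type''; the cleanest is to take $x^n-1$ (or more generally $F(x^n)$) and use $(vii)$ together with a counting argument — or simply point to the standard example $x^2-1=(x-1)(x+1)=(x-a)(x-a^{-\sigma})$-type identities in $\mathbb{F}_{q^2}[x;\sigma_q]$, where the same polynomial factors in genuinely different ways even though degrees of factors are forced to match; the non-uniqueness is in the factors themselves, not their degrees.

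Statement $(vi)$ — invariance of the multiset of degrees of the factors — is the Jordan–Hölder theorem for the modular lattice of submodules of the cyclic module $R/Rf$: a factorization $f=g_1\cdots g_k$ corresponds to a chain of left submodules $R/Rf \supsetneq Rg_k/Rf \supsetneq \cdots \supsetneq 0$ whose successive quotients have $L$-dimensions $\deg g_i$, so two factorizations give two composition-like series of the same finite-length module, and Jordan–Hölder (valid in any modular lattice, or in the category of $R$-modules) gives the permutation $\pi$. I would set this up via the submodule lattice and invoke the Schreier refinement / Jordan–Hölder theorem. Finally $(vii)$, the finite case: if $F$ is irreducible of degree $d$ over finite $K$, then $(F(x^n))$ is a prime two-sided ideal by $(v)$, and the quotient $R/(F(x^n))$ is a simple algebra over the field $K[y]/(F)=\mathbb{F}_{q^d}$, finite-dimensional, hence (Wedderburn) a matrix ring $M_m(\mathbb{F}_{q^s})$ over a field; matching dimensions forces $ms=n d / \,?$ — here one uses $\dim_K R/(F(x^n)) = n\deg(F(x^n))/\!\deg\cdots$, i.e.\ $n^2 d$, so that the only possibility is $m=n$, $s=d$ up to the relation $sk=\deg F=d$ in the notation of the theorem; right divisors of $F(x^n)$ in $R$ biject with left submodules of $R/(F(x^n))\cong M_n(\mathbb{F}_{q^d})$, whose submodule structure (column spaces) forces their $L$-codimensions, hence the divisor degrees, to be multiples of $d$, and the irreducible ones correspond to minimal submodules, which have codimension exactly $d$.

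The main obstacle I expect is $(v)$ together with the simplicity claim feeding into $(vii)$: pinning down exactly which two-sided ideals occur, and proving $R/(F(x^n))$ is simple (equivalently that $F(x^n)$ generates a maximal two-sided ideal when $F$ is irreducible), requires a genuinely non-commutative argument — one cannot just quote commutative algebra over the centre, because $R$ is not finite over its centre in the naive PID sense and the module $R/(F(x^n))$ is not a priori simple as a ring. The honest route is either (a) to quote Ore's original structure theorem directly, or (b) to prove the simplicity by hand using that any nonzero two-sided ideal of $R/(F(x^n))$ contains a nonzero central element (project a minimal-degree element of the ideal into the centre via a norm-like trace over $\langle\sigma\rangle$) which must then be a unit in $K[y]/(F)$ since $F$ is irreducible. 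I would go with route (a) for $(v)$, citing Ore, and give route (b)-style reasoning only to the extent needed to make $(vii)$ self-contained, since $(vii)$ is the part actually used in the sequel.
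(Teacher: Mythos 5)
The paper gives no proof of Theorem~\ref{thm:skewprop}: it states these facts as classical and attributes them to Ore~\cite{Ore1933}, with item (vii) implicitly resting on the structure of $R/RF(x^n)$ developed later via~\cite{Giesbrecht1998}. Your sketch therefore reconstructs the classical argument rather than competing with one in the paper, and it follows the standard route: the leading-coefficient computation for (i), the twisted division algorithm for (iii) and hence principality of one-sided ideals, direct computation of the centre for (iv), Hilbert-90-style multiple factorisations of a central quadratic for (ii), Jordan--H\"older on the lattice of left submodules of $R/Rf$ for (vi), and Wedderburn applied to the simple quotient for (vii). This is exactly Ore's and Jacobson's route, and the content is correct, but three points need repair if you write it out. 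First, in (v) two-sidedness of $Rf$ does \emph{not} force the minimal-degree monic generator $f$ into the centre: the condition $fa\in Rf$ for all $a\in L$ gives $fa=a^{\sigma^{d}}f$ with $d=\deg f$, hence only that $f_i\ne 0$ implies $i\equiv d\pmod n$, so the generator has the form $g(x^n)x^m$ with $g\in K[y]$; in particular $Rx$ is a prime (indeed maximal) two-sided ideal not of the form $RF(x^n)$, and $Rx^n$ is not prime -- a degenerate case the paper's own statement also elides, harmless later since every $F$ used there has $F_0\ne 0$. Second, (vi) is false as literally stated unless the $g_i$ and $h_j$ are irreducible (compare $f=f$ with $f=g_1g_2$); your Jordan--H\"older argument silently and correctly assumes irreducibility, but you should say so. Third, in (vii) matching $K$-dimensions $m^2s'=n^2s$ does not alone force $m=n$ and $s'=s$; you must first identify the centre of $R/RF(x^n)$ with $K[y]/(F)$, which pins down $s'=s$ and then $m=n$, exactly as in the proposition of Section~3.2 quoted from~\cite{Giesbrecht1998}. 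With those repairs your outline is a complete and correct proof of the theorem as intended.
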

Further background and facts about skew polynomial rings can be found in \cite{Giesbrecht1998}, \cite{SheekeyThesis}, \cite{LeBorgne}.

\begin{definition}
The {\it minimal central left multiple} of an element $f\in R$ is the unique monic polynomial $F(x^n)$ of minimal degree in $Z(R)$ such that $F(x^n) = gf$ for some $g\in R$. 
\end{definition}

From the above properties of skew polynomial rings, we have the following fact.
\begin{corollary}
If $f$ is an irreducible element of $R$ of degree $s$, then its minimal central left multiple $F(x^n)$ is such that $F$ is an irreducible element of $K[y]$ of degree $s$.
\end{corollary}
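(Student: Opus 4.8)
The plan is to realise the minimal central left multiple $F(x^n)$ of $f$ as the annihilator of the simple left $R$-module $M:=R/Rf$, and then read off both irreducibility and the degree from the structure of $M$ and of the quotient $R/(F(x^n))$.

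First I would record the setup. Since $\sigma$ has order $n$ with fixed field $K$, Theorem \ref{thm:skewprop}(iv) gives $[L:K]=n$ and $Z(R)=K[x^n;\sigma]\cong K[y]$, while by Theorem \ref{thm:skewprop}(iii) $R$ is left-Euclidean, so $Rf$ is a maximal left ideal (as $f$ is irreducible) and $M=R/Rf$ is a simple left $R$-module; right division by $f$ shows $\dim_L M=s$, hence $\dim_K M=sn$. The centre $Z(R)$ acts $K$-linearly on $M$ by right multiplication, and since $\dim_K M<\infty$ while $Z(R)\cong K[y]$ is infinite-dimensional over $K$, this action has nonzero kernel. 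A short computation identifies this kernel with $\mathfrak m:=Rf\cap Z(R)$ (a central $z$ satisfies $z\cdot M=0$ iff $Rz\subseteq Rf$, and for central $z$ this is equivalent to $z\in Rf$). Thus $\mathfrak m$ is a nonzero ideal of $K[y]$, generated by the monic element of least degree it contains, which by definition is the minimal central left multiple $F(x^n)$ of $f$. A symmetric argument, together with the fact that every two-sided ideal of $R$ is generated by a central element (Theorem \ref{thm:skewprop}(v)), shows $\mathfrak m=\operatorname{Ann}_R(M)$, the largest two-sided ideal of $R$ contained in $Rf$; in particular $M$ is a faithful simple $R/(F(x^n))$-module.

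Next I would prove $F$ is irreducible in $K[y]$. Suppose $F=F_1F_2$ with $F_1,F_2$ monic and non-constant; then $F(x^n)=F_1(x^n)F_2(x^n)$ with the two factors central and commuting. Each acts on $M$ as an element of $\operatorname{End}_R(M)$, which is a division ring by Schur's lemma, so each acts either invertibly or as $0$. Their product $F(x^n)$ acts as $0$, being in $\operatorname{Ann}_R(M)$, so at least one factor, say $F_i(x^n)$, acts as $0$; then $F_i(x^n)\in\mathfrak m=(F(x^n))$, so $F\mid F_i$ in $K[y]$, contradicting $\deg F_i<\deg F$. Hence $F$ is irreducible.

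Finally, for the degree: once $F$ is irreducible, $f$ is an irreducible right divisor of $F(x^n)$, so when $K$ is finite Theorem \ref{thm:skewprop}(vii) immediately gives $\deg f=\deg F$, i.e. $\deg F=s$. For an arbitrary field one instead argues that $R/(F(x^n))$ is a finite-dimensional $K$-algebra which is primitive (it has the faithful simple module $M$), hence simple Artinian with $M$ as its unique simple module; comparing $\dim_K R/(F(x^n))=n^2\deg F$ with $\dim_K M=sn$ and using that the centre of $R/(F(x^n))$ is the image of $Z(R)\cong K[y]/(F)$, a degree-$\deg F$ extension of $K$, forces $\deg F=s$. I expect the main work to be the identification $\mathfrak m=\operatorname{Ann}_R(M)$, and, in the general-field case, pinning down the centre of $R/(F(x^n))$; the irreducibility is then a one-line Schur-lemma argument and the finite-field degree statement is immediate from Theorem \ref{thm:skewprop}(vii).
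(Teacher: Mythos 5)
Your finite-field argument is correct and is essentially the route the paper intends: the paper gives no written proof, deriving the corollary directly from Theorem \ref{thm:skewprop}, and your identification of $RF(x^n)$ with $\Ann_R(R/Rf)$ (the largest two-sided ideal contained in the maximal left ideal $Rf$), plus the Schur-lemma argument for irreducibility of $F$, is a clean expansion of how part (v) is being invoked; the degree statement is then, as you say, immediate from part (vii).

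The genuine gap is in your last paragraph, the claimed proof of $\deg F = s$ over an arbitrary field. Primitivity only gives $R/(F(x^n))\cong M_m(D)$ for some division ring $D$ with centre $E_F=K[y]/(F)$; writing $d^2=\dim_{E_F}D$, your two dimension counts read $n^2\deg F=m^2d^2\deg F$ and $sn=md^2\deg F$, hence $n=md$ and $s=d\deg F$. Knowing the centre of the quotient does not force $d=1$, so the count only yields $\deg F=s/d$. Indeed the degree claim is false over infinite fields: take $K=\RR$, $L=\CC$, $\sigma$ complex conjugation, $f=x^2+1$. Then $f$ is irreducible in $R=\CC[x;\sigma]$ of degree $s=2$ (here $R/Rf\cong\mathbb{H}$ is a division ring, so $Rf$ admits no proper larger left ideal and $f$ has no degree-one right divisor), yet $f$ is already central, so its minimal central left multiple is $F(x^2)$ with $F(y)=y+1$ of degree $1\neq 2$. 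This is exactly why part (vii) of Theorem \ref{thm:skewprop} is stated only for finite $K$: Wedderburn's theorem forces $D=E_F$ there, and your dimension count (or (vii) directly) then closes the argument. Keep the finite-field proof and either delete the ``arbitrary field'' version or add a hypothesis ruling out nontrivial division algebras over $K$.
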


\subsection{Petit's (cyclic) semifields}

Semifields have be constructed from skew polynomial rings in an analogous way to the construction of a field extension from a polynomial ring. We now briefly recall this construction. For details we refer to \cite{Petit}, \cite{LaShSkew}, \cite{SheekeyThesis}, \cite{Dempwolff2011}, \cite{Pumpluen}.

Let $f$ be an irreducible element of $R$ of degree $s$. Define $V$ as the set of elements of $R$ of degree at most $s-1$. Define a semifield $\SSS_f$ whose elements are the elements of $V$, with  multiplication defined by
\[
a \circ b := ab \modr f,
\]
where $\modr f$ denotes the remainder on {\it right} division by $f$; this is well defined, and non-zero for $a,b\ne 0$, due to the fact that $R$ is a right-Euclidean domain. 

In \cite{Petit} this construction was given, and the nuclei were calculated. Alternative proofs can be found in \cite{LaShSkew}, \cite{Dempwolff2011}. For $L=\Fqn$ a finite field, it holds that
\[
|\SSS_f|=q^{ns};\quad |\NN_l| =|\NN_m|= q^n;\quad |\NN_r| = q^s;\quad |Z| = q.
\]

This extended a construction of Sandler \cite{Sandler1962}, which in turn can be seen as a nonassociative generalisation of Cayley-Dickson algebras and cyclic algebras. In \cite{LaShSkew} it was shown that the {\it cyclic} or {\it Jha-Johnson semifields} of \cite{JHJO1989} are in fact isotopic to Petit's semifields. It was also shown that this construction contains some families of semifields defined by Knuth \cite{Knuth1965}.  

In \cite{LaShSkew} and \cite{Dempwolff2011} the isotopy classification for Petit's semifields was considered. The following upper bound for the number of isotopy classes was proved in \cite{LaShSkew}, improving on that in \cite{KALI2008}. If $\mathcal{P}_{n,s}$ denotes the set of isotopy classes from Petit's construction with $L=\Fqn$, $\deg(f)=s$, then 
\[
|\mathcal{P}_{n,s}| \leq q^s/s
\] 
In particular it was shown that if two irreducible polynomials $f,g$ have the same {\it minimal central left multiple}, then $\SSS_f$ and $\SSS_g$ are isotopic.
 
We will now reinterpret this construction in a way that will allow us to extend this construction to a family of MRD codes for all minimum distances, and also to extend the construction to obtain new semifield and further new MRD codes, in an analogous way to the construction of the twisted Gabidulin codes.

\subsection{Matrices from skew polynomial rings}
\label{ssec:matfromskew}

Consider the skew polynomial ring $R = L[x;\sigma]$. A polynomial defines a two-sided ideal in $R$ if and only if it is of the form $F(x^n)$ for some $F(y)\in K[y]$, and the ideal $RF(x^n)$ is maximal (as a two-sided ideal) if and only if $F(y)$ is irreducible in $K[y]$. All left-ideals are of the form $Rf$ for some $f\in R$, and a left-ideal is maximal if and only if $f$ is irreducible. Furthermore, the unique maximal two-sided ideal containing the maximal left-ideal $Rf$ is precisely $RF(x^n)$, where $F$ is the minimal central left multiple of $f$.

Suppose from now on that $F$ is a monic irreducible polynomial in $K[y]$, and $\deg(F) = s$. Then $R/RF(x^n)$ is a simple algebra, with centre $Z(R)/RF(x^n)\simeq K[y]/\langle F(y)\rangle$; this is a field extension of $K$ of order $s$, which we will denote by $E_F$. Then we have the following (which can be found for example in \cite{Giesbrecht1998}).
\begin{Proposition} For any irreducible polynomial $F$ in $K[y]$ with $\deg(F) = s$, we have
\[
R_F := \frac{R}{RF(x^n)} \simeq \End_{E_F}(E_F^n)\simeq M_n(E_F).
\]
\end{Proposition}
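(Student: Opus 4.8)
The plan is to realize $R_F$ as a split central simple $E_F$-algebra by exhibiting a faithful module of the expected dimension. I begin with two dimension counts. Since $F$ is monic of degree $s$, the central element $F(x^n)$ is monic of degree $ns$ in $x$, so $R_F = R/RF(x^n)$ is free as a left $L$-module on $1,x,\dots,x^{ns-1}$, giving $\dim_K R_F = n\cdot ns = n^2 s$. Let $E_F$ denote the image of the centre $Z(R)=K[x^n;\sigma]$ in $R_F$. One checks that $Z(R)\cap RF(x^n) = Z(R)F(x^n)$ --- indeed if $z = rF(x^n)$ is central then $(ar-ra)F(x^n)=0$ for every $a\in R$, forcing $r\in Z(R)$ since $R$ is a domain --- so under $Z(R)\simeq K[y]$ this intersection corresponds to the ideal $(F(y))$, whence $E_F\simeq K[y]/(F(y))$ is a field with $[E_F:K]=s$ lying in $Z(R_F)$; hence $\dim_{E_F}R_F = n^2$. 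Finally, because $F$ is irreducible the ideal $RF(x^n)$ is a maximal two-sided ideal (as recalled at the start of Section~\ref{ssec:matfromskew}), so $R_F$ is simple, and it is Artinian since it is finite-dimensional over $K$.

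The heart of the argument is to produce a simple left $R_F$-module of dimension exactly $n$ over $E_F$. As $R$ is a right-Euclidean domain and $F(x^n)$ is a non-unit (it is central of positive degree), $F(x^n)$ has an irreducible right divisor $f$, say $F(x^n)=gf$; by Theorem~\ref{thm:skewprop}(vii) we have $\deg_x f = \deg F = s$. Put $M := R/Rf$. Since $F(x^n)\in Rf$ we get $RF(x^n)\subseteq Rf$, so $M$ is a left $R_F$-module, and it is simple because $Rf$ is a maximal left ideal ($f$ being irreducible). As a left $L$-module $M$ is free on $1,x,\dots,x^{s-1}$, so $\dim_K M = ns$; since the central subfield $E_F\subseteq R_F$ acts on $M$, this makes $M$ an $E_F$-space with $\dim_{E_F}M = ns/s = n$.

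To conclude, consider the ring homomorphism $\lambda\colon R_F\to \End_{E_F}(M)$ given by left multiplication; it lands in $\End_{E_F}(M)$ precisely because $E_F$ is central in $R_F$, and $\lambda(1)=\id_M\neq 0$, so $\lambda\neq 0$. As $R_F$ is simple, $\ker\lambda$ is a proper two-sided ideal, hence $\ker\lambda = 0$ and $\lambda$ is injective. Comparing $E_F$-dimensions, $\dim_{E_F}R_F = n^2 = \dim_{E_F}\End_{E_F}(M)$, so $\lambda$ is an isomorphism. Fixing an $E_F$-basis of $M$ identifies $M\simeq E_F^n$ and $\End_{E_F}(M)\simeq M_n(E_F)$, yielding $R_F\simeq \End_{E_F}(E_F^n)\simeq M_n(E_F)$ (and en passant $Z(R_F)=E_F$). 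Alternatively one may quote Artin--Wedderburn, $R_F\simeq M_k(D)$ with $D$ a division ring, and use the existence of the simple module $M$ of $E_F$-dimension $n$ to force $k=n$ and $D=E_F$.

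The main obstacle is the input controlling the degree of the irreducible right divisor $f$: Theorem~\ref{thm:skewprop}(vii) (valid for $K$ finite) is exactly what upgrades ``$\dim_{E_F}M \mid n$'' to ``$\dim_{E_F}M = n$'', and hence what makes the algebra split rather than merely a matrix ring over some division algebra over $E_F$. Everything else --- the identification of $E_F$ as a degree-$s$ central subfield via $Z(R)\cap RF(x^n)=Z(R)F(x^n)$, and the injectivity-plus-dimension step for $\lambda$ --- is routine. Over a more general base field one replaces Theorem~\ref{thm:skewprop}(vii) by the corresponding statement on the degrees of right divisors of the bound $F(x^n)$ (equivalently, one works under the hypothesis that $R_F$ is split).
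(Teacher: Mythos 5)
Your proof is correct. The paper does not actually prove this Proposition---it only cites Giesbrecht---so there is no internal argument to compare against; what you give is the standard one: $R_F$ is simple Artinian, contains the central subfield $E_F\simeq K[y]/(F)$ of degree $s$ (via $Z(R)\cap RF(x^n)=Z(R)F(x^n)$), and is split by the faithful simple module $R/Rf$ of $E_F$-dimension $n$, with injectivity of the left-regular map plus a dimension count finishing the job. You also correctly isolate the one non-routine input, Theorem~\ref{thm:skewprop}(vii) (equivalently, Wedderburn's little theorem, i.e.\ triviality of the Brauer group of a finite field), as the step that forces the degree of $f$ to be exactly $s$ and hence the algebra to split rather than be $M_{n/d}(\Delta)$ for a nontrivial division algebra $\Delta$; this matches the paper's standing restriction to fields for which the construction splits.
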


When $F(y)=y-1$, we have $E_F=K$ and we get the previous correspondence between linearized polynomials and $M_n(K)$, if we identify $x^i$ with $X^{\sigma}$, and skew polynomial multiplication with composition. The correspondence is then $M_n(K)\simeq \frac{L[x;\sigma]}{(x^n-1)}$.

%

\subsection{Formulation as maps on a vector space}

We can see the action of $R_F \simeq M_n(E_F)$ as a linear map on a vector space isomorphic to $E_F^n$ as follows. 

Let $f$ be a monic irreducible divisor of $F(x^n)$. By Theorem \ref{thm:skewprop} we must have $\deg(f)=s$, and $F(x^n)$ is the minimal central left multiple of $f$.

Let $V_f = \{a+Rf:a \in R\}$ be the set of cosets of the maximal left-ideal defined by $f$. Let $E_f = \{z+Rf:z \in Z(R)\}$. Note that $E_F$ is isomorphic to $E_f$, since for any $z\in Z(R)$, we have that $z\in Rf$ if and only if $z\in RF(x^n)$, by definition of minimal central left multiple..

For $z\in Z(R)$, $a\in R$, we define $(a+Rf)(z+Rf) = az+Rf$. Note that this is well-defined only because $z$ is in the centre of $R$. Under this multiplication, $E_f$ is a finite field isomorphic to $\Fqs$. Furthermore, $V_f$ is a vector space over $E_f$, of dimension $n$, and so we can identify it with $E_f^n$.

We can then view the elements of $\frac{R}{RF(x^n)}$ as $E_f$-endomorphisms of $V_f$ by identifying an element $a+RF(x^n)$ with the map
\[
M_a:b+Rf \mapsto ab+Rf.
\]
This is well-defined; if $a+RF(x^n)=a+RF(x^n)$, then $a' = a+cF(x^n)$. Therefore $M_{a'}(b+Rf) = a'b+Rf = (ab+cF(x^n)b)+Rf = (ab+cbF(x^n))+Rf = ab+Rf = M_a(b+Rf)$ for all $b\in R$, since $F(x^n)\in Z(R)\cap Rf$.

It is not straightforward to find a canonical $E_f$-basis for $V_f$. However we will consider some particular cases in Section \ref{ssec:ns2}.

Note that for $\deg(a)<s$, this coincides precisely with the maps of left multiplication in the cyclic semifield $\SSS_f$, as defined in \cite{Petit}.
\[
M_a(b+Rf) := (ab \modr f) +Rf.
\]
Thus 
\[
\C(\SSS_f)\simeq \{M_a:\deg(a)<s\} \simeq \{a+RF(x^n):\deg(a)<s\}.
\]
\begin{remark}
In \cite{LaShSkew} it was shown that if $f$ and $g$ have the same minimal central left multiple, then $\SSS_f$ and $\SSS_g$ are isotopic. In this new formulation, this is immediately clear; the choice of different divisors of $F(x^n)$ give the action of the same set of elements on isomorphic vector spaces, and hence the spread sets are equivalent.
\end{remark}

\subsection{Minimal central left multiple}

In this section we present an explicit method for computing the minimal central left multiple of an element of $R$, and prove certain useful relations between the coefficients of an element $f$ and its minimal central left multiple. This will allow us to generalise Petit's construction.

For an element $f\in R$, define a semilinear map $\phi_f$ on $L^s$ as follows. We identify the tuple $(v_0,\ldots,v_{k-1})$ in $L^s$ with the polynomial $\sum_{i=0}^{k-1} v_i x^i$, and define 
\[
\phi_f(v) := xv \modr f.
\]
Note that this is the same definition as the map $L_x$ defined above, but we use different notation in order to explicitly distinguish between maps on $E^n$ and maps on $L^s$. It is clear that $\phi_f$ is indeed semilinear, as 
\[
\phi_f(av) = x(av) \modr f = a^{\sigma}(xv \modr f )= a^{\sigma}\phi_f(v)
\] 
for all $a \in L$. By choosing the canonical basis $\{1,x,\ldots,x^{k-1}\}$ for $L^s$, and writing the elements of $L^s$ as column vectors, we see that $\phi_f = C_f\circ \sigma$, where $\sigma$ acts entry-wise on vectors, and $C_f$ denotes the companion matrix of the polynomial $f$;
\[
\phi_f(v) = C_f\cdot v^{\sigma}=\npmatrix{0&0&\cdots&0&-f_0\\1&0&\cdots&0&-f_1\\\vdots&\ddots&\cdots&\cdots&\vdots\\0&0&\cdots&1&-f_{s-1}}\npmatrix{v_0^{\sigma}\\v_1^\sigma\\\vdots\\v_{s-1}^{\sigma}}.
\]
Now the map $\phi_f^n$ defines a linear map on $L^s$; it corresponds to $v\mapsto x^n v \modr f$, and we will denote its matrix with respect to the same basis by $A_f$. Then we have that
\[
A_f = C_f C_f^{\sigma}\cdots C_f^{\sigma^{n-1}},
\]
where again $\sigma$ acts entry-wise on matrices. In \cite{LeBorgne}, the characteristic polynomial of $A_f$ was referred to as the {\it semi-characteristic polynomial} of the semilinear transformation $\phi_f$.

\begin{theorem}
\label{thm:minpoly}
The minimal central left multiple of a monic element $f\in R$ of degree $s$ is equal to the minimal polynomial of the matrix $A_f$ over $K$.
\end{theorem}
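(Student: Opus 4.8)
The plan is to identify both the minimal central left multiple of $f$ and the minimal polynomial of $A_f$ with the monic generator of a single ideal of $K[y]$: the ideal of those $p$ for which the central skew polynomial $p(x^n)$ annihilates the left $R$-module $V_f := R/Rf$.

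First I would set up this module. Since $R$ is right-Euclidean (Theorem \ref{thm:skewprop}(iii)), every coset in $V_f=R/Rf$ has a unique representative of degree less than $s=\deg f$, so $V_f$ is an $L$-vector space of dimension $s$ with basis the images of $1,x,\dots,x^{s-1}$; as $Rf$ is a left ideal, left multiplication makes $V_f$ a left $R$-module. A one-line induction on $k$ using right division shows that the action of $x^k$ on $V_f$ is $v\mapsto x^k v \modr f$, which in the chosen basis is exactly $\phi_f^k$. In particular the action of $x$ is the semilinear map $\phi_f=C_f\circ\sigma$, and --- because $x^n\in Z(R)$ and $\sigma^n=\id$ --- the action of $x^n$ is the $L$-linear map $\phi_f^n$, whose matrix in this basis is $A_f=C_fC_f^\sigma\cdots C_f^{\sigma^{n-1}}$.

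Next I would pass to the centre. The module action is a $K$-algebra homomorphism $R\to\End_K(V_f)$; its restriction to $Z(R)=K[x^n;\sigma]\cong K[y]$ sends $y$ to the operator $\phi_f^n$ (matrix $A_f$), hence sends $p(y)$ to $p(\phi_f^n)$ (matrix $p(A_f)$). Therefore, for $p\in K[y]$,
\[
p(x^n)\in Rf \iff p(x^n)\cdot(1+Rf)=0 \text{ in } V_f \iff p(A_f)=0,
\]
where in the first equivalence the implication $(\Leftarrow)$ uses centrality: if $p(x^n)$ kills $1+Rf$, then $p(x^n)(a+Rf)=ap(x^n)+Rf=a\cdot 0=0$ for every $a\in R$. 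So the ideal $J:=\{p\in K[y]: p(x^n)\in Rf\}$ coincides with the ideal of polynomials vanishing at $A_f$, which is nonzero ($M_s(L)$ is finite-dimensional over $K$) and whose monic generator is by definition the minimal polynomial of $A_f$ over $K$. Since every element of $Z(R)\cap Rf$ has the form $p(x^n)$ with $p\in J$, and $p(x^n)$ is monic of degree $n\deg p$ precisely when $p$ is monic, the monic central left multiple of $f$ of least degree is $F(x^n)$ with $F$ the minimal polynomial of $A_f$, which is the claim.

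The computations here are all routine; the one place that needs care is the translation dictionary of the first two paragraphs --- verifying that the left $R$-module structure on $R/Rf$ genuinely recovers the semilinear map $\phi_f$, that iterating it and specialising to $x^n$ produces an honest $L$-linear operator with matrix $A_f$, and that ``annihilates the cyclic generator $1+Rf$'' upgrades, via centrality, to ``annihilates all of $V_f$''. Once that is in hand, the identification of the two ideals, and hence of $F$ with the minimal polynomial of $A_f$, is immediate.
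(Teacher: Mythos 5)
Your proof is correct and is essentially the paper's argument in module-theoretic packaging: both hinge on the same two steps, namely that $p(A_f)=0$ evaluated at $v=1$ forces $f$ to right-divide $p(x^n)$, and conversely that centrality of $p(x^n)$ upgrades $p(x^n)\in Rf$ to $p(x^n)$ annihilating all of $V_f$, i.e.\ $p(A_f)=0$. The paper phrases this as mutual divisibility of the two monic polynomials rather than as the identification of two ideals of $K[y]$, but the content is identical.
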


\begin{proof}
Let $F$ denote the minimal central left multiple of $f$, and let $G$ denote the minimal polynomial of $A_f$, both of which lie in $K[y]$. Then $G(A_f)\equiv 0$. But $G(A_f)v = G(x^n)v \modr f$ for all $v$ of degree less than $s$, and so choosing $v=1$ gives that $G(x^n)\equiv 0 \modr f$; i.e. $G(x^n)$ is divisible by $f$. Therefore $G$ is divisible by $F$ in $K[y]$.

Conversely, $F(A_f)v = F(x^n)v\modr f = vF(x^n) \modr f\simeq 0 \modr f$ for all $v$, and so $F(A_f)\equiv 0$. Hence $F$ is divisible by  $G$ in $K[y]$, and as they are both monic, they must be equal.
\end{proof}

\begin{example}\label{ex:af}
Suppose $L$ is two-dimensional over $K$, and $f = x^2-\alpha x-\beta \in R$. Then 
\[
C_f = \npmatrix{0&\beta \\1&\alpha };\quad A_f = \npmatrix{0&\beta \\1&\alpha }\npmatrix{0&\beta ^{\sigma}\\1&\alpha ^{\sigma}} = \npmatrix{\beta &\alpha ^{\sigma}\beta \\\alpha &\beta ^{\sigma}+\alpha ^{\sigma+1}}
\]

Then the characteristic polynomial of $A_f$ is $G(y) = y^2 -(\beta +\beta ^{\sigma}+\alpha ^{\sigma+1})y+\beta ^{\sigma+1}$, which is in $K[y]$. A direct calculation shows that 
\[
G(x^2) =x^4 -(\beta +\beta ^{\sigma}+\alpha ^{\sigma+1})x^2+\beta ^{\sigma+1}= (x^2+  \alpha x -\beta ^{\sigma})(x^2-\alpha x-\beta ).
\]

As $f$ is a right-divisor of $G(x^2)$, the minimal central left multiple of $f$ divides $G(x^2)$. Furthermore, $f$ is irreducible if and only $G(y)$ is irreducible in $K[y]$. For example, taking $\alpha =0$ gives us that $x^2-\beta $ is irreducible in $R$ if and only if $y^2-(\beta +\beta ^{\sigma})y+\beta ^{\sigma+1}$ is irreducible in $K[y]$, which occurs if and only if $\beta \notin K$. If $\alpha =0$ and $\beta \in K$, then $f$ is in the centre, and so is equal to its own minimal central left multiple; in this case, $f = x^2-\beta  = (x+\gamma^{\sigma})(x-\gamma)$, where $\gamma\in L$ is any element such that $\gamma^{\sigma+1}=\beta $.

If $\alpha \ne 0$ and $q$ is odd, then $f$ is irreducible if and only if $\Delta = (\beta +\beta ^{\sigma}+\alpha ^{\sigma+1})^2-4\beta ^{\sigma+1}$ is a non-square in $K$.

\end{example}

\subsection{Divisors of $F(x^n)$}\label{ssec:div}

The expression for $A_f$ in terms of $C_f$ gives us the following immediate result, which will be one of the keys to the construction in a later section.
\begin{theorem}
If $f\in L[x;\sigma]$ is monic, irreducible, with $\deg(f)=s$, and $F\in K[y]$ is its minimal central left multiple, then
\[
N(f_0) = (-1)^{s(n-1)} F_0,
\]
where $N(a) = a^{1+\sigma+\cdots+\sigma^{n-1}}$ is the norm map from $L$ to $K$, and $f_0$ and $F_0$ are the constant coefficients of $f$ and $F$ respectively.
\end{theorem}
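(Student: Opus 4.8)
The plan is to read off the constant coefficient of the minimal central left multiple $F(x^n)$ from the matrix $A_f$ using Theorem \ref{thm:minpoly}. By that theorem, $F$ is the minimal polynomial of $A_f = C_f C_f^{\sigma}\cdots C_f^{\sigma^{n-1}}$ over $K$, and since $f$ is irreducible, the Corollary to Theorem \ref{thm:skewprop} tells us $F$ is irreducible of degree $s$ in $K[y]$. Since $A_f$ is an $s\times s$ matrix and its minimal polynomial $F$ has degree $s$, the minimal and characteristic polynomials of $A_f$ coincide (up to sign $(-1)^s$ for the characteristic one); hence $F(y) = \pm\,\chi_{A_f}(y)$, and the constant term $F_0$ of the monic polynomial $F$ equals $(-1)^s \det(A_f)\cdot(\text{unit})$, which I will pin down precisely as $F_0 = (-1)^s \det(A_f)$ by comparing $F(0)$ with $\det(-A_f) = (-1)^s\det(A_f)$.

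The next step is to compute $\det(A_f)$. From $A_f = C_f C_f^{\sigma}\cdots C_f^{\sigma^{n-1}}$ and multiplicativity of the determinant,
\[
\det(A_f) = \det(C_f)\det(C_f^{\sigma})\cdots\det(C_f^{\sigma^{n-1}}) = \prod_{i=0}^{n-1}\bigl(\det C_f\bigr)^{\sigma^i}.
\]
The determinant of the companion matrix of $f = x^s + f_{s-1}x^{s-1} + \cdots + f_0$ is $\det(C_f) = (-1)^s f_0$ (one checks this directly from the displayed form of $C_f$, expanding along the last column or the first row). Therefore
\[
\det(A_f) = \prod_{i=0}^{n-1}\bigl((-1)^s f_0\bigr)^{\sigma^i} = (-1)^{sn}\, f_0^{1+\sigma+\cdots+\sigma^{n-1}} = (-1)^{sn} N(f_0),
\]
since $(-1)^s \in K$ is fixed by $\sigma$. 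Combining with $F_0 = (-1)^s\det(A_f)$ gives $F_0 = (-1)^{s}(-1)^{sn} N(f_0) = (-1)^{s(n+1)} N(f_0) = (-1)^{s(n-1)} N(f_0)$, as $(-1)^{2s} = 1$. Rearranging yields $N(f_0) = (-1)^{s(n-1)} F_0$, which is the claim.

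The only genuinely delicate point is bookkeeping of the signs: getting $\det(C_f) = (-1)^s f_0$ right for the stated orientation of the companion matrix, and correctly relating $F_0 = F(0)$ to $\det(A_f)$ via the characteristic polynomial (is $F = \chi_{A_f}$ or $F = (-1)^s\chi_{A_f}$, and correspondingly $F(0) = \pm\det(A_f)$). Both are routine but must be done consistently; Example \ref{ex:af} provides a useful sanity check, since there $s=2$, $\det(C_f) = -\beta \cdot(-1) = ?$ — actually $\det(C_f) = 0\cdot\alpha - \beta\cdot 1 = -\beta = (-1)^2\cdot(-\beta)$ with $f_0 = -\beta$, consistent — and $F_0 = \beta^{\sigma+1} = N(\beta) = N(-f_0) = (-1)^{s(n-1)}N(f_0)$ with $s=n=2$, matching $(-1)^{2\cdot 1} = 1$. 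No serious obstacle is expected beyond this sign tracking.
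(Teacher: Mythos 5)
Your proof is correct and follows essentially the same route as the paper: identify $F$ with the characteristic polynomial of $A_f$ (using irreducibility to force minimal $=$ characteristic), evaluate at $0$ to get $F_0 = (-1)^s\det(A_f)$, and compute $\det(A_f) = N(\det C_f) = N((-1)^s f_0)$, with the same sign bookkeeping. The sanity check against Example \ref{ex:af} is a nice touch but the argument is the paper's own.
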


\begin{proof}
As $f$ is irreducible, so is $F$. As $F$ is equal to the minimal polynomial of $A_f$ by Theorem \ref{thm:minpoly}, we have that $F$ is equal to the characteristic polynomial of $A_f$. Therefore $F(y) = \det(yI-A_f)$, and so $F_0=F(0) = \det(-A_f) = (-1)^s\det(A_f)$.

As $A_f = C_f C_f^{\sigma}\cdots C_f^{\sigma^{n-1}}$, we have that $\det(A_f) = N(\det(C_f))$. Now $\det(C_f) = (-1)^s f_0$, and so $F_0=(-1)^s\det(A_f) = (-1)^s N((-1)^s f_0) = (-1)^{ns+s}N(f_0)$ , proving the claim.
\end{proof}

In Example \ref{ex:af} above, where $n=s=2$, we saw that $f_0=-\beta$, $N(f_0) = \beta^{\sigma+1}$, and $F_0 = \beta^{\sigma+1}$.

\begin{theorem}
If $F$ is a monic irreducible of degree $s$ in $K[y]$, and $g$ is a monic divisor of $F(x^n)$ in $L[x;\sigma]$ of degree $sk$, then 
\[
N(g_0) = (-1)^{ks(n-1)} F_0^k.
\]
\end{theorem}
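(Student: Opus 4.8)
The plan is to mimic the proof of the case $k=1$ (the preceding theorem), working directly with the companion matrix of $g$ rather than factoring $g$ into irreducibles. Let $C_g$ denote the $sk\times sk$ companion matrix of the monic polynomial $g$, and set $A_g=C_gC_g^{\sigma}\cdots C_g^{\sigma^{n-1}}$, exactly as before Theorem \ref{thm:minpoly}; then $A_g$ represents the $K$-linear map $v\mapsto x^nv\modr g$ on the space of polynomials of degree less than $sk$, one has $\det C_g=(-1)^{sk}g_0$, and hence $\det A_g=N(\det C_g)=(-1)^{skn}N(g_0)$. The whole proof then reduces to identifying the characteristic polynomial $p(y)=\det(yI-A_g)$ of $A_g$ and comparing constant terms, since $p(0)=(-1)^{sk}\det A_g$.

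First I would show $F(A_g)=0$. Since $g$ divides the central element $F(x^n)$, write $F(x^n)=hg$; centrality gives $(hg)g=g(hg)$, and cancelling $g$ on the right (using that $R$ is a domain) yields $hg=gh$, so $F(x^n)$ is also a left multiple of $g$. Consequently $vF(x^n)=(vh)g\equiv 0\modr g$ for every $v$ of degree less than $sk$. Since $A_g^{j}$ represents $v\mapsto x^{nj}v\modr g$ and the coefficients of $F$ lie in $K$, this gives $F(A_g)v=F(x^n)v\modr g=0$ for all such $v$, i.e.\ $F(A_g)=0$. As $F$ is irreducible over $K$, the minimal polynomial of $A_g$ over $K$ is therefore exactly $F$.

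Next I would argue that $p(y)$ lies in $K[y]$. Assuming $F_0\neq 0$ (the case $F=y$, which forces $g=x^{sk}$ and $g_0=0$, makes the asserted identity read $0=0$ and is handled separately), $\det C_g\neq 0$, so $C_g$ is invertible, and from $C_g^{\sigma^n}=C_g$ one gets $A_g^{\sigma}=C_g^{-1}A_gC_g$; hence $p^{\sigma}=p$, and since $\sigma$ generates $\Gal(L:K)$ we conclude $p\in K[y]$. A monic polynomial in $K[y]$ of degree $sk$ whose minimal polynomial is the irreducible degree-$s$ polynomial $F$ must equal $F^{k}$. Evaluating at $0$ now gives $F_0^{k}=p(0)=(-1)^{sk}\det A_g=(-1)^{sk}N\bigl((-1)^{sk}g_0\bigr)=(-1)^{sk}(-1)^{skn}N(g_0)$, and rearranging yields $N(g_0)=(-1)^{sk(n-1)}F_0^{k}$.

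The step most in need of care is identifying $p$ with $F^{k}$: one must pass $p$ into $K[y]$ before using that the characteristic and minimal polynomials share the same irreducible factors, since over $L$ the polynomial $F$ may well split, and the degenerate case $F_0=0$ must be dispatched by hand. An alternative, avoiding this point, is induction on $k$: peel off a monic irreducible right factor $g_1$ of $g=g'g_1$, which has degree $s$ by Theorem \ref{thm:skewprop}(vii) since it is an irreducible right divisor of $F(x^n)$, and whose minimal central left multiple is therefore $F(x^n)$, so that the $k=1$ theorem applies to $g_1$; check that $g'$ is again a (two-sided, by the centrality trick above) divisor of $F(x^n)$, apply the induction hypothesis to $g'$, and finish using $g_0=g_0'\,(g_1)_0$ and multiplicativity of $N$. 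In either approach the only genuine inputs beyond bookkeeping are the centrality of $F(x^n)$ and the irreducibility of $F$.
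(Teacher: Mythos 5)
Your main argument is correct, but it takes a genuinely different route from the paper. The paper's proof is a two-line reduction to the $k=1$ case: it factors $g=h_1\cdots h_k$ into monic irreducibles, each of which is (similar to) an irreducible right divisor of $F(x^n)$ and hence has degree $s$ and minimal central left multiple $F(x^n)$, applies the preceding theorem to each $h_i$, and multiplies the resulting norms, using that $g_0$ is the product of the constant terms of the $h_i$. You instead generalise the determinant computation of the $k=1$ proof directly: you form the $sk\times sk$ companion matrix of $g$, show $F(A_g)=0$ via the centrality of $F(x^n)$, pass the characteristic polynomial into $K[y]$ via $A_g^{\sigma}=C_g^{-1}A_gC_g$, and identify it as $F^k$ before comparing constant terms. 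The trade-off: the paper's route is shorter but leans on the factorisation theory (Theorem \ref{thm:skewprop}(vi),(vii), stated there for $K$ finite) to know that $g$ splits into degree-$s$ irreducibles; your route avoids that entirely and as a by-product extends Theorem \ref{thm:minpoly} by showing that the semi-characteristic polynomial of any monic right divisor of $F(x^n)$ of degree $sk$ is exactly $F^k$, which is of independent interest. Your alternative sketch (induction, peeling off an irreducible right factor and invoking the $k=1$ case) is essentially the paper's proof. Two points worth making explicit in your write-up: that $F_0\neq 0$ implies $g_0\neq 0$ (since $g_0=0$ would make $x$ a right divisor of $g$ and hence of $F(x^n)$, forcing $F_0=0$, so $C_g$ is indeed invertible where you need it); and, when concluding $p=F^k$, that an irreducible factor of $p$ over $K$ shares a root with $F$ over $L$ and therefore equals $F$ — which you already flag as the delicate step.
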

\begin{proof}
We know that $g$ factorises into monic irreducibles of degree $s$. Let $g=h_1h_2\cdots h_k$ be such a factorisation. By Theorem \ref{thm:norm}, we have that the norm of the constant coefficient of $h_i$ is $(-1)^{s(n-1)} F_0$ for each $i$, and so since $g_0$ is the product of the constant coefficients of the $h_i$'s, the claim follows immediately.
\end{proof}

\begin{remark}
Choosing $s=1$ and $F = y-1$ returns the results of \cite{SheekeyMRD}.
\end{remark}

\subsection{The rank of a skew polynomial in $R_F$}\label{ssec:rank}

Let us fix $F$ a monic irreducible of degree $s$ in $K[y]$, and let $E_F = Z(R_F) \simeq \frac{K[y]}{K[y]F}$. By abuse of notation, let us identify an element $a$ of $R$ with the image of $a+RF(x^n)$ in $M_n(E_F)$. The following proposition demonstrates how to calculate the rank of a polynomial in the matrix ring defined by $F$.

\begin{proposition}
\label{prop:rank}
The rank of the polynomial $a$ as an element of $M_n(E_F)$ is given by
\[
\rank(a) = n - \frac{1}{s}\deg(\gcrd(a,F(x^n))).
\]
\end{proposition}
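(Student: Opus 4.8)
The plan is to work with the concrete realisation of $R_F$ acting on the vector space $V_f$, where $f$ is any monic irreducible right divisor of $F(x^n)$ (such $f$ exists by Theorem \ref{thm:skewprop}(vii), and has degree $s$). Recall from Section \ref{ssec:matfromskew} that $a+RF(x^n)$ acts on $V_f = R/Rf$ as the map $M_a: b+Rf \mapsto ab+Rf$, and that this is an $E_f$-endomorphism of the $n$-dimensional $E_f$-space $V_f$; so the rank of $a$ as an element of $M_n(E_F)$ equals $\dim_{E_f}(\im M_a)$, which by rank--nullity equals $n - \dim_{E_f}(\ker M_a)$. Hence the whole statement reduces to computing the kernel of $M_a$, i.e. the set of cosets $b+Rf$ with $ab \in Rf$.

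First I would identify this kernel with a left-ideal quotient. Set $d = \gcrd(a, F(x^n))$, so $d$ is a monic common right divisor of maximal degree; since $f \mid F(x^n)$ on the right and $Rf$ is contained in the two-sided ideal $RF(x^n)$, one checks that $ab \in Rf$ precisely when $f \mid (ab)$ on the right modulo the ideal, which I would translate into the statement $b + Rf \in \ker M_a \iff a b \in Rf$. The key structural fact is that $\{b : ab \in Rf\}$ is exactly $R e + Rf$-related to the right-divisor structure: more precisely, writing $a = a' d$ with $\gcrd(a', F(x^n)/\!\ldots) = 1$ appropriately, the condition $ab\in Rf$ becomes a divisibility condition governed by $\gcrd(a,f)$. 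Because $f \mid F(x^n)$, one has $\gcrd(a,f) = \gcrd(\gcrd(a,F(x^n)), f) = \gcrd(d,f)$, and since all irreducible right divisors of $F(x^n)$ have degree $s$ and $d \mid F(x^n)$, we in fact get $\deg \gcrd(a,f) = \deg d$ for a suitable choice of the divisor $f$ — this is where Theorem \ref{thm:skewprop}(vi),(vii) on degrees of divisors is essential. Then $\ker M_a$ is the image in $R/Rf$ of the left-ideal $\{b : ab \in Rf\}$, which is $R g$ for $g$ the least common left multiple quotient; counting degrees, $\dim_{E_f}\ker M_a = \tfrac{1}{s}\deg\gcrd(a,f) = \tfrac{1}{s}\deg d$.

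Concretely, the cleanest route: let $e := \gcrd(a, f)$, write $f = e' e$ (a factorisation into monic pieces of degrees summing to $\deg f = s$) — no, better to use left division: since $R$ is a left-Euclidean domain and $e$ is a right divisor of both, write $a = \bar a e$ and $f = \bar f e$ with $\gcrd(\bar a, \bar f)$ a unit. Then for $b\in R$, $ab \in Rf$ iff $\bar a e b \in R\bar f e$ iff (cancelling $e$ on the right using that $R$ is a domain and $e$ right-cancellable after noting both sides lie in $R e$... ) $\bar a (e b e^{-1}\!\cdot e) \in \ldots$ — I'd instead argue directly that $Rf \cap Ra = R\,\mathrm{lclm}(f,a)$ and that the map $R/Rf \to R/Ra$, $b+Rf \mapsto ab+Ra$ has kernel $\mathrm{lclm}(f,a)/f$, whose $E_f$-dimension is $\tfrac{1}{s}(\deg f + \deg a - \deg\mathrm{lclm}(f,a)) = \tfrac{1}{s}\deg\gcrd(f,a) = \tfrac{1}{s}\deg\gcrd(a,F(x^n))$, the last equality again from $\deg\gcrd(a,f)=\deg\gcrd(a,F(x^n))$ for an appropriately chosen $f$. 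Finally $\im M_a$ in $R/Rf$ is $(Ra + Rf)/Rf$, whose $E_f$-dimension is $n - \tfrac1s\deg\mathrm{lclm}(f,a) + \tfrac1s\deg\ldots$; the bookkeeping closes to give $\rank(a) = n - \tfrac1s\deg\gcrd(a,F(x^n))$.

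The main obstacle is the claim $\deg\gcrd(a,f) = \deg\gcrd(a,F(x^n))$ independently of which irreducible right divisor $f$ of $F(x^n)$ one picks, together with the justification that the $\mathrm{lclm}$/$\gcrd$ degree identity $\deg f + \deg a = \deg\mathrm{lclm}(f,a) + \deg\gcrd(f,a)$ holds in this non-commutative setting (it does, by the standard Euclidean-algorithm argument in $L[x;\sigma]$, but it needs to be invoked carefully). Establishing that $\gcrd(a,f)$ captures the same degree as $\gcrd(a,F(x^n))$ uses that $F(x^n)$ is two-sided and generates the unique maximal two-sided ideal over $Rf$, so the "new" irreducible factors of $F(x^n)/f$ contribute nothing extra to $\gcrd$ with a fixed $a$ beyond what is already seen modulo $Rf$ — this is exactly the content that makes the rank well-defined (independent of $f$), consistent with the Remark following the formulation in Section \ref{ssec:matfromskew}.
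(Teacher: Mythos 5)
Your overall strategy---compute the nullity of $M_a$ acting on the simple module $V_f=R/Rf$ for one irreducible right divisor $f$ of $F(x^n)$---is viable in principle, and differs from the paper, which instead computes the right annihilator $\Ann_r(a)=\{v: av\in RF(x^n)\}$ inside the regular module $R/RF(x^n)$ and uses $n(n-\rank(a))=\dim_{E_F}\Ann_r(a)$. But your execution rests on a false key identity. You reduce everything to $\gcrd(a,f)$, claiming $\dim_{E_f}\ker M_a=\tfrac1s\deg\gcrd(a,f)$ and $\deg\gcrd(a,f)=\deg\gcrd(a,F(x^n))$ ``for an appropriately chosen $f$''. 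Since $\deg\gcrd(a,f)\le\deg f=s$, the first identity would force $\rank(a)\ge n-1$ for every $a$, which is absurd (a right divisor of $F(x^n)$ of degree $ks$ has rank $n-k$); and the second cannot hold for \emph{any} irreducible $f$ once $\deg\gcrd(a,F(x^n))>s$. Even when $\deg\gcrd(a,F(x^n))=s$ the identity typically fails: take $a=f'$ an irreducible divisor of $F(x^n)$ with $\gcrd(f',f)=1$ (these exist in abundance); then $\gcrd(a,f)=1$ yet $\rank(a)=n-1$, so $\ker M_a$ is nonzero on $V_f$ --- precisely because $f'$ and $f$ are similar, not because they share a right divisor. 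In short, the kernel of left multiplication by $a$ on $R/Rf$ is \emph{not} governed by $\gcrd(a,f)$ or by $\mathrm{lclm}(a,f)$, and the degree identity $\deg f+\deg a=\deg\mathrm{lclm}(f,a)+\deg\gcrd(f,a)$, while true, is not the relevant tool.

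There are also two local defects: the set $\{b:ab\in Rf\}$ is not a left ideal (you cannot move $r$ past $a$ in $arb$), and the map $R/Rf\to R/Ra$, $b+Rf\mapsto ab+Ra$ is not well-defined ($arf\notin Ra$ in general). The quantity that actually controls the nullity is $\gcrd(a,F(x^n))$, and to extract it you must exploit that $F(x^n)$ is \emph{central}: writing $F(x^n)=\delta\gamma=\gamma\delta$ with $\gamma=\gcrd(a,F(x^n))$ and $a=b\gamma$ with $\gcrd(b,\delta)=1$, a Bezout relation $cb+d\delta=1$ can be multiplied on the right by $\gamma$ and reduced modulo the two-sided ideal $RF(x^n)$ (the coefficient $d\delta\gamma=dF(x^n)$ absorbs into the ideal because $F(x^n)$ commutes); no such manipulation is available modulo the merely one-sided ideal $Rf$. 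This is the step your reduction to $f$ cannot reproduce, and it is exactly where the paper's proof does its work.
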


\begin{proof}
Define $\mathrm{Ann_r}(A) = \langle B \in M_n(E_F) : AB=0\rangle$; that is, the subspace of elements of $M_n(E_F)$ annihilated by left multiplication by $A$.

Note that the rank of $A\in M_n(E_F)$ satisfies
\[
n \cdot (n-\rank(A)) = \dim_{E_F}(\Ann_r(A)).
\]
Let $\gamma = \gcrd(a,F)$, $F= \delta\gamma$. Then as $R$ is a domain, we also have $F= \gamma\delta$. Let $b$ be the unique element of $R$ such that $a = b\gamma$; it follows that $\gcrd(b,\delta)=1$.

Let $v = \delta u +w$, where $\deg(w)< \deg(\delta)$ and $\gcld(w,\delta)=1$. Such elements are unique and can be calculated from the left Euclidean algorithm. Then $av = a\delta u +aw = b\gamma\delta u +b\gamma w = bFu+b\gamma w$, and so
\[
av \modr F \equiv b\gamma w \modr F.
\]
It suffices to show that this is zero if and only if $w=0$, whence $\Ann_r(a) = \{\delta u: \deg(u) < \deg(\gamma)\}$, which clearly has dimension $n\deg(\gamma)$ over $K$, and so dimension $\frac{n\deg(\gamma)}{s}$ over $E_F$. proving the claim.

Now as $\gcrd(b,\delta)=1$ there exist $c,d \in R$ such that $cb+d\delta=1$. Then $cb\gamma+d\delta\gamma=\gamma$, and so $cb\gamma \equiv \gamma \modr F$.

Now $b\gamma w\equiv 0 \modr F$ implies that $cb\gamma w \equiv 0 \modr F$. But since $cb\gamma w \equiv \gamma w \modr F$ (using the fact that $F$ is in the centre), we get $\gamma w \equiv 0 \modr F$. But $\deg(w)< \deg(\delta)$, and so $\deg(\gamma w)<\deg(F)$. Hence this can occur if and only if $w=0$, completing the proof.
\end{proof}

\begin{remark}
Taking $L=\Fqn$, $s=1$, $F(y) = y-1$, $x^\sigma = x^q$, we can see that this matches with the rank of a corresponding linearized polynomial as a linear map on $L$. We identify $a=\sum_{i=0}^{n-1}a_i x^i$ with $A = \sum_{i=0}^{n-1}a_i X^{q^i}$. Then as the number of roots of $A$ in $L$ is $\deg(\gcd(A,X^{q^n}-X))$, we have
\[
\rank(A) = n-\log_q \deg(\gcd(A,X^{q^n}-X)).
\]
Now since it is known \cite{Lidl} that $\deg(\gcd(A,B)) = q^{\deg(gcrd(a,b))}$ for any skew polynomial $b$ with corresponding linearised polynomial $B$, then we see that
\[
\rank(a)=n - \deg(\gcrd(a,x^n-1)) = \rank(A).
\]
\end{remark}

We can combine Proposition \ref{prop:rank} with Theorem \ref{thm:norm} to get the following.

\begin{theorem}
\label{thm:norm}
If $a\in R$ is a polynomial of degree at most $sk$, then the rank of $a$ as an element of $M_n(E)$ is at least $n-k$. Furthermore, if the rank of $a$ is equal to $n-k$ then $\frac{N(a_0)}{N(a_{sk})} = (-1)^{sk(n-1)} F_0^k.$
\end{theorem}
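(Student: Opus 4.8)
The plan is to read off the rank directly from Proposition~\ref{prop:rank} and then control it using the structural fact, from Theorem~\ref{thm:skewprop}(vii), that every right divisor of $F(x^n)$ has degree a multiple of $s=\deg F$; the equality case will then reduce to the norm computation for divisors of $F(x^n)$ already established above. Concretely, I would set $\gamma=\gcrd(a,F(x^n))$, chosen monic, and put $d=\deg\gamma$. Since $\gamma$ is a right divisor of $F(x^n)$, it is either a unit or, by Theorem~\ref{thm:skewprop}(vii), has degree $s\ell$ with $\ell\in\{1,\dots,n\}$; in all cases $d=s\ell$ for some integer $0\le\ell\le n$. On the other hand $\gamma$ right-divides $a$, so $d\le\deg a\le sk$ and hence $\ell\le k$. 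Proposition~\ref{prop:rank} then gives
\[
\rank(a)=n-\tfrac{1}{s}\deg\gamma=n-\ell\ge n-k,
\]
which is the first assertion.

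For the second assertion, assume $\rank(a)=n-k$; we may assume $a\ne 0$, since otherwise the asserted ratio is not defined. Then $\ell=k$, so $d=sk$, and the chain $sk=d\le\deg a\le sk$ forces $\deg a=sk$; in particular $a_{sk}\ne 0$. Since $\gamma$ right-divides $a$ with $\deg\gamma=\deg a$, we may write $a=c\gamma$ with $c\in L^{\times}$ a nonzero constant. As $\gamma$ is monic of degree $sk$, comparing leading and constant terms gives $a_{sk}=c$ and $a_0=c\gamma_0$, so by multiplicativity of the norm $N\colon L\to K$,
\[
\frac{N(a_0)}{N(a_{sk})}=\frac{N(c)\,N(\gamma_0)}{N(c)}=N(\gamma_0).
\]
Finally, $\gamma$ is a monic divisor of $F(x^n)$ of degree $sk$, so the norm formula for such divisors proved above yields $N(\gamma_0)=(-1)^{ks(n-1)}F_0^{\,k}$, which is exactly the desired identity.

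I do not anticipate a genuine obstacle here: all the substantive content already sits in Proposition~\ref{prop:rank} and in the earlier norm computations, and what remains is bookkeeping. The two points deserving a little care are (a) that $\gcrd(a,F(x^n))$ may be taken monic, so that the degree-of-divisor statement of Theorem~\ref{thm:skewprop}(vii) applies and the factorisation $a=c\gamma$ with $c$ a constant is legitimate; and (b) the degenerate cases --- $a=0$, $\gamma$ a unit, or $k=0$ --- where the stated ratio should be read as vacuous or as the trivial identity $1=(-1)^{0}F_0^{0}$.
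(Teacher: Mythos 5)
Your proposal is correct and follows essentially the same route as the paper: both read the rank off Proposition~\ref{prop:rank}, deduce in the equality case that $\deg(a)=sk$ so that $a$ is, up to its leading coefficient, a monic right divisor of $F(x^n)$, and then invoke the norm formula for such divisors together with multiplicativity of $N$. Your version is merely a little more explicit about the degree bookkeeping (via Theorem~\ref{thm:skewprop}(vii)) and the degenerate cases, which the paper glosses over.
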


\begin{proof}
If $\deg(a)\leq sk$ and $\rank(a) = n-k$, we must have that $\deg(a)= sk$ and $a$ is a divisor of $F(x^n)$. Then $a_{sk}^{-1}a$ is a monic divisor of $F(x^n)$, and so by Theorem \ref{thm:norm}, we have
\[
N(a_0/a_{sk}) = (-1)^{ks(n-1)} F_0^k,
\]
and since the norm map is multiplicative, the claim is proven.
\end{proof}

%
%
%
%
%

\section{New Constructions}
\label{sec:const}

We are now ready to introduce our new family of MRD codes.

\begin{theorem}
\label{thm:construction}
Let $L$ be a field, $\sigma$ an automorphism of $L$ with fixed field $K$, and $\rho$ an automorphism of $L$ over some field $K'\leq K$. Let $R= L[x;\sigma]$, let $F$ be an irreducible polynomial in $K[y]$ of degree $s$, $E=\frac{K[y]}{(F(y))}$, and $R_F = \frac{R}{RF(x^n)}$.

Then the set
\[
S_{n,s,k}(\eta,\rho,F) := \{a+ RF(x^n):\deg(a) \leq ks, a_{ks} = \eta a_0^{\rho}\}
\]
defines a $K'$-linear MRD code in $R_F\simeq M_n(E_F)$ with minimum distance $n-k+1$ for any $\eta\in L$ such that $N_{L:K'}(\eta)N_{K:K'}((-1)^{sk(n-1)}F_0^k) \ne 1$.
\end{theorem}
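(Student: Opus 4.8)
The plan is to verify the two defining requirements of an MRD code separately: that $S_{n,s,k}(\eta,\rho,F)$ is a $K'$-subspace of the correct dimension, and that every nonzero element has rank at least $n-k+1$. The first part is essentially bookkeeping. The set $\{a : \deg(a)\le ks,\ a_{ks}=\eta a_0^\rho\}$ is closed under addition because $\rho$ is additive, and closed under multiplication by $K'$ because $\rho$ fixes $K'$ (so $(\lambda a_0)^\rho = \lambda a_0^\rho$ for $\lambda\in K'$). Its cardinality over a finite $K'$ is $|L|^k$, i.e. the free parameters are $a_0,a_1,\dots,a_{ks-1}$ with $a_{ks}$ determined; passing to the quotient $R_F$ is injective on polynomials of degree $\le ks < ns$, so the image has the same size. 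This matches the Singleton-like bound for an $[n\times n,\, nks\cdot[K:K'],\, n-k+1]$ code (in the infinite-field case one argues with $K'$-dimension instead). I would state this and move on.

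The heart of the argument is the rank bound. Let $a$ be a nonzero element of the set, so $\deg(a)\le ks$ and $a_{ks}=\eta a_0^\rho$. By Theorem \ref{thm:norm} (the degree version), $\rank(a)\ge n-k$ automatically, so it suffices to rule out $\rank(a)=n-k$. Suppose for contradiction that $\rank(a)=n-k$. Then that same Theorem \ref{thm:norm} forces $\deg(a)=ks$ exactly, $a$ to be a right divisor of $F(x^n)$, and the norm identity
\[
\frac{N(a_0)}{N(a_{ks})} = (-1)^{sk(n-1)} F_0^k
\]
to hold, where $N = N_{L:K}$. Substituting $a_{ks}=\eta a_0^\rho$ gives $N(a_{ks}) = N(\eta)\,N(a_0^\rho) = N(\eta)\,N(a_0)^{\rho}$ (using that $\rho$ normalises the situation; more carefully one rewrites $N_{L:K}(a_0^\rho)$ in terms of $N_{L:K}(a_0)$ via the action of $\rho$ on $K$). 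Taking $N_{K:K'}$ of the resulting relation and rearranging yields
\[
N_{L:K'}(\eta)\, N_{K:K'}\!\big((-1)^{sk(n-1)}F_0^k\big) = 1,
\]
contradicting the hypothesis. Hence $\rank(a)\ge n-k+1$ for all nonzero $a$ in the set, and combined with the dimension count this shows $S_{n,s,k}(\eta,\rho,F)$ is MRD.

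The step I expect to be the main obstacle is the careful manipulation of the norm maps when $\rho$ does not fix $K$ (only $K'\le K$). One must track how $\rho$ permutes the conjugates $a_0, a_0^\sigma,\dots,a_0^{\sigma^{n-1}}$ and how $N_{L:K}(a_0^\rho)$ relates to $(N_{L:K}(a_0))^{\bar\rho}$ for the induced automorphism $\bar\rho$ of $K$, and then why applying $N_{K:K'}$ kills that twist — this is exactly why the hypothesis is phrased with $N_{L:K'}$ and $N_{K:K'}$ rather than $N_{L:K}$. I would isolate this as a short lemma (the multiplicativity and $\rho$-equivariance of norms, together with $N_{K:K'}\circ\bar\rho = N_{K:K'}$), prove it once, and then the contradiction falls out cleanly. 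Everything else — closure, counting, and the appeal to Theorems \ref{thm:norm} and \ref{thm:minpoly} and Proposition \ref{prop:rank} — is routine given the machinery already developed.
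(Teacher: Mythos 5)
Your proposal is correct and follows essentially the same route as the paper: a dimension count, the automatic bound $\rank(a)\ge n-k$ from Theorem \ref{thm:norm}, and then ruling out rank exactly $n-k$ by substituting $a_{ks}=\eta a_0^{\rho}$ into the norm identity and applying $N_{K:K'}$ to kill the $\rho$-twist (the paper writes this as $N_{L:K}(a_0)^{1-\rho}N(\eta)^{-1}=(-1)^{ks(n-1)}F_0^k$ and takes $N_{K:K'}$ of both sides). The subtlety you flag about $\rho$ not fixing $K$ is indeed exactly why the hypothesis is stated with $N_{L:K'}$ and $N_{K:K'}$, and your handling of it matches the paper's.
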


\begin{proof}
Clearly $S_{n,s,k}(\eta,\rho,F)$ has dimension $nks[K:K']$ over $K'$, while $R_F$ has dimension $n^2s[K:K']$ over $K'$. Thus it suffices to show that the minimum rank of a non-zero element of $S_{n,s,k}(\eta,\rho,F)$ is $n-k+1$.

If $\eta=0$ or $a_0=0$, then this follows immediately from Proposition \ref{prop:rank}, since $\deg(\gcrd(a,F(x^n)))\leq s(k-1)$.

If $a_0\eta\ne 0$, then $a\in S_{n,s,k}(\eta,\rho,F)$ has rank at least $n-k$, and by Theorem \ref{thm:norm} has rank equal to  $n-k$ only if $N_{L:K}(a_0/a_{sk}) = N_{L:K}(a_0)^{1-\rho}N(\eta^{-1}) = (-1)^{ks(n-1)} F_0^k$. Taking the norm from $K$ to $K'$ of both sides completes the proof.
\end{proof}
When $n$, $s$, and $F$ are clear we will write $S_{k}(\eta,\rho)$. When $L=\Fqn$, $S_{n,s,k}(\eta,\rho,F)$ is an MRD code in $M_n(\Fqs)$ of size $q^{nsk}$ with minimum distance $n-k+1$.

\begin{remark}
Setting $k=1$ gives a family of semifields. Setting $\eta=0$, $k=1$ returns the cyclic semifields. Setting $k=1$, $s=1$ returns the Generalised Twisted Fields. Hence we have provided a construction which incorporates two of the most general known constructions into a single family. We will show in the next section that this family also contains new semifields.
\end{remark}

\begin{remark}
Setting $\eta=0$, $s=1$, $F=y-1$ returns the generalised Gabidulin codes; when $s=1$ we take $f=x-1$, $F(x^n)=x^n-1$, and we have the usual correspondence between skew polynomials $\sum_{i=0}^{k-1} a_i x^i$ and linearized polynomials $\sum_{i=0}^{k-1} a_i X^{\sigma^i}$.

Setting $s=1$, $F=y-1$, $\eta\ne 0$ returns the twisted Gabidulin codes $H_k(\eta,h)$ when $a^{\rho} = a^{\sigma^h}$, as well as the generalisations from \cite{Ozbudak1} when $\rho$ is an automorphism of $L$ which does not fix $K$.
\end{remark}


\subsection{Worked example; $n=s=2$}
\label{ssec:ns2}

Here we will give an explicit correspondence between the quotient ring $R/RF(x^n)$ and the matrix algebra $M_n(\Fqs)$ for the case $n=s=2$, and hence give a representation of all spread sets arising from this new construction in this special case. We note that all such semifields have been classified in \cite{CAPOTR2004}, where it was shown that for $q$ odd, all examples are equivalent to one of four known constructions; generalised twisted field, generalised Dickson semifield, Hughes-Kleinfeld semifield, and semifields of Cordero-Figueroa type. All of these can be realised as examples from our new construction.

Let us take $F\in \Fq[y]$ irreducible of degree $2$, and let $\beta $ be a root of $F$ in $\FF_{q^2}$. Then $x^2-\beta$ is irreducible and divides $F(x^2)$ in $\FF_{q^2}[x;\sigma]$, since 
\[
(x^2-\beta ^{\sigma})(x^2-\beta ) = x^4-(\beta +\beta ^{\sigma})x^2 +\beta ^{\sigma+1} = F(x^2).
\]
Let $\beta+\beta^{\sigma} = \lambda,$ $\beta^{\sigma+1} = \mu$, and so $F(y) = y^2-\lambda y+\mu$, and $\beta^2 = \lambda \beta - \mu$.

Then as $x^2+Rf = \beta +Rf$, we have that $E_f = \langle 1+Rf,\beta+Rf\rangle_{\Fq} = \{\alpha +Rf:\alpha \in \Fqt\}\simeq \Fqt$. In the following we will write $\alpha$ for $\alpha +Rf$ for brevity. It is clear that $\{1+Rf,x+Rf\}$ is an $E_f$-basis for $V_f$.

For any $a_0\in \Fqt$, we have that
\begin{align*}
M_{a_0}(1+Rf) &= a_0 +Rf = (1+Rf)(a_0+Rf),\\
M_{a_0}(x+Rf) &= a_0 x+Rf = (x+Rf)(a_0^{\sigma}+Rf).
\end{align*}

Hence with respect to this basis, and writing elements of $V_f$ as column vectors with entries in $E_f$, we get that
 \[
 M_{a_0} = \npmatrix{a_0&0\\0&a_0^{\sigma}}.
 \]
Next, we have
\begin{align*}
M_{a_1 x}(1+Rf) &= a_1 x+Rf = (x+Rf)(a_1 ^{\sigma}+Rf),\\
M_{a_1 x}(x+Rf) &= a_1 x^2+Rf = (1+Rf)(a_1 \beta+Rf)
\end{align*}
and so 
\[
M_{a_1 x} = \npmatrix{0&a_1 \beta\\a_1^{\sigma}&0}.
\]
Similarly we get that 
\[
M_{a_2 x^2} = \npmatrix{a_2\beta&0\\0& a_2^{\sigma}\beta},\quad\quad
M_{a_3 x^3} = \npmatrix{0&a_3 \beta^2\\a_3^{\sigma}\beta&0}.
\]
 
Hence we get
 \[
 M_{a_0+a_1x+a_2x^2+a_3x^3} = \npmatrix{a_0+a_2\beta&a_1\beta+a_3\beta^2\\a_1^{\sigma}+a_3^{\sigma}\beta&a_0^{\sigma}+a_2^{\sigma}\beta}.
 \]

Then the spread set of the presemifield $S_1(\eta,\rho)$ can be represented as
\[
\left\{\npmatrix{a_0+\eta a_0^{\rho}\beta&a_1\beta\\a_1^{\sigma}&a_0^{\sigma}+\eta^\rho a_0^{\rho\sigma}\beta}:a_0,a_1\in \Fqt\right\}.
\]
Choosing $\eta=0$ returns the cyclic semifields, with spread set
\[
\left\{\npmatrix{a_0&a_1\beta\\a_1^{\sigma}&a_0^{\sigma}}:a_0,a_1\in \Fqt\right\}.
\]
In this case these semifields coincide with the Hughes-Kleinfeld construction. It is clear that all matrices in this set are invertible: the determinant of an element is $a_0^{\sigma+1}-a_1^{\sigma+1}\beta$, and since $a_i^{\sigma+1}\in \Fq$, and $\beta\notin \Fq$, this can be zero if and only if $a_0=a_1=0$.

Choosing $\rho=\sigma$, we get 
\[
\left\{\npmatrix{a_0+\eta \beta a_0^{\sigma}&a_1\beta\\a_1^{\sigma}&a_0^{\sigma}+\eta^\rho a_0^{\rho\sigma}\beta}:a_0,a_1\in \Fqt\right\}.
\]
Choosing $\rho=\sigma$, $\eta^{\sigma+1}\beta^2=1$, this is equivalent to 
\[
\left\{\npmatrix{a_0&a_1\\ \eta^{-\sigma} a_1&a_0}:a_0,a_1\in \Fqt\right\},
\]
which is the spread set of a generalised Dickson semifield.

Choosing $\rho=1$ gives
\[
\left\{\npmatrix{a_0+\eta\beta a_0^\sigma&a_1\beta \\  a_1^\sigma&(1+\eta\beta)a_0^\sigma}:a_0,a_1\in \Fqt\right\},
\]
which is the spread set of a semifield of Cordero-Figueroa type.


Hence we have shown the following:
\begin{theorem}
All semifields of order $q^4$ with centre of order $q$ and right nucleus of order $q^2$ are isotopic to a semifield of the form $S_{n,s,1}(\eta,\rho)$, for some $\eta\in \Fqn$, $\rho \in \Aut(\Fqn)$, $[n,s]\in \{[2,2],[4,1]\}$.
\end{theorem}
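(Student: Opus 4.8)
The plan is to reduce the statement to the explicit spread-set computations already carried out in Section~\ref{ssec:ns2}, combined with the known classification of these semifields. First I would note that a semifield $\SSS$ of order $q^4$ with $|\NN_r|=q^2$ is two-dimensional as a right vector space over its right nucleus, so by Proposition~\ref{prop:nuc} its spread set $\C(\SSS)$ is an $\Fq$-linear MRD code of minimum distance $2$ inside $\End_{\NN_r}(\SSS)\simeq M_2(\Fqt)$ containing the identity. For $q$ odd this is exactly the setting of \cite{CAPOTR2004}, which asserts that every such semifield is isotopic to one of four families: a generalised twisted field, a generalised Dickson semifield, a Hughes--Kleinfeld semifield, or a semifield of Cordero--Figueroa type. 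By Proposition~\ref{prop:nuc} and the translation of isotopy into equivalence of spread sets, it then suffices to realise each of these four families as some $S_{2,2,1}(\eta,\rho)$; I note in passing that the $[n,s]=[4,1]$ alternative in the statement is not actually needed for this parameter class, since one checks directly that every $S_{4,1,1}(\eta,\rho)$ is a generalised twisted field with right nucleus of order $q$, never $q^2$.

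Three of the four families are already handled in Section~\ref{ssec:ns2}: there the Hughes--Kleinfeld semifields were identified with the cyclic semifields $S_{2,2,1}(0,\rho)$, the generalised Dickson semifields with $S_{2,2,1}(\eta,\sigma)$ subject to $\eta^{\sigma+1}\beta^2=1$, and the semifields of Cordero--Figueroa type with $S_{2,2,1}(\eta,1)$. So the only point left to establish is that the generalised twisted fields of order $q^4$ lying in this parameter class are also captured. By Albert's computation of the nuclei \cite{Albert1961}, such a twisted field necessarily has $|\NN_l|=|\NN_m|=|Z|=q$, and up to isotopy its multiplication can be written $a\circ b=ab-c\,a^{q^{i}}b^{q^{2}}$ with $i$ odd and $N(c)\ne1$.

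To close the twisted-field case I would use the nuclear parameters of $S_{2,2,1}(\eta,\sigma)$ computed in Section~\ref{sec:isnew}: for $\eta^{\sigma+1}\beta^2\ne1$ these equal $(q^4,q,q,q^2,q)$, so by \cite{CAPOTR2004} each such semifield is isotopic to one of the four families. It is not Hughes--Kleinfeld since $\eta\ne0$; and, using the facts (established in Section~\ref{ssec:ns2}) that the generalised Dickson examples occur precisely when $\eta^{\sigma+1}\beta^2=1$ up to equivalence and the Cordero--Figueroa examples only when $\rho$ may be taken to be the identity, for $\eta$ outside a proper closed subset the semifield $S_{2,2,1}(\eta,\sigma)$ must be a generalised twisted field. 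To upgrade this from ``some'' to ``all'' --- that is, to see that \emph{every} twisted field in the class arises --- I would compare isotopy-class counts, using the bound $|\mathcal{P}_{2,2}|\le q^2/2$ for the cyclic sub-family together with the known isotopy classification of generalised twisted fields, so as to conclude that the assignment $\eta\mapsto[S_{2,2,1}(\eta,\sigma)]$ already surjects onto the twisted fields of this parameter class. A more hands-on alternative is the direct identification: fix an $\Fqt$-basis $\{1,t\}$ of $\FF_{q^4}$, write the left-multiplication maps of $a\circ b=ab-c\,a^{q^{i}}b^{q^{2}}$ as $2\times2$ matrices over $\Fqt$, and exhibit invertible $A,B$ and the automorphism $\rho=\sigma$ for which $A\,\C(\text{GTF})^{\rho}B$ is exactly the spread set displayed in Section~\ref{ssec:ns2} for the choice $\rho=\sigma$, for a suitable $\eta$ with $\eta^{\sigma+1}\beta^2\ne1$.

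I expect the twisted-field step to be the main obstacle: the counting route requires the enumeration of twisted-field isotopy classes to mesh cleanly with the parametrisation by $\eta$, while the direct route, though in principle routine, is a somewhat delicate change-of-basis calculation in $M_2(\Fqt)$. A secondary issue is that \cite{CAPOTR2004} is stated only for $q$ odd; for $q$ even one would need either the even-characteristic analogue of that classification, or a self-contained argument showing that the codes $S_{2,2,1}(\eta,\rho)$ already exhaust all $\Fq$-linear MRD codes of minimum distance $2$ in $M_2(\Fqt)$ that contain the identity. Everything else is bookkeeping already present in Sections~\ref{ssec:ns2} and~\ref{sec:isnew}.
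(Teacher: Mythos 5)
Your overall strategy --- reduce to the classification of \cite{CAPOTR2004} and then realise each of the four families inside the new construction --- is exactly the paper's, and your identifications of the Hughes--Kleinfeld, generalised Dickson and Cordero--Figueroa families with $S_{2,2,1}(0,\rho)$, $S_{2,2,1}(\eta,\sigma)$ (with $\eta^{\sigma+1}\beta^2=1$) and $S_{2,2,1}(\eta,1)$ are the ones carried out in Section~\ref{ssec:ns2}. Where you diverge is the generalised twisted field case, and that is where your argument has a genuine gap. The paper covers the twisted fields by the $[n,s]=[4,1]$ alternative; that is the only reason $[4,1]$ appears in the statement, via the remark that $k=s=1$ returns the generalised twisted fields. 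You discard that alternative, and your objection --- that $S_{4,1,1}(\eta,\rho)$ is the twisted field $ab+\eta a^{\rho}b^{\sigma}$ with $\sigma$ of order $4$, hence has right nucleus $\mathrm{Fix}(\sigma)=\Fq$ of order $q$ rather than $q^2$ --- is a fair criticism of the statement and worth raising. But having discarded it, you must then actually prove that every twisted field of order $q^4$ with centre $q$ and right nucleus $q^2$ is isotopic to some $S_{2,2,1}(\eta,\rho)$, and you do not.

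Concretely: your elimination argument (nuclear parameters $(q^4,q,q,q^2,q)$, not Hughes--Kleinfeld, generically not Dickson or Cordero--Figueroa) only shows that a \emph{generic} $S_{2,2,1}(\eta,\sigma)$ is \emph{some} generalised twisted field; it says nothing about surjectivity onto the isotopy classes of twisted fields with these parameters, which is what the theorem asserts. Of the two routes you sketch to close this, the counting one is the more problematic: the bound $|\mathcal{P}_{2,2}|\le q^2/2$ from \cite{LaShSkew} concerns the cyclic ($\eta=0$) subfamily and gives no control over the map $\eta\mapsto[S_{2,2,1}(\eta,\sigma)]$, and you would in any case need the exact number of relevant twisted-field isotopy classes rather than an upper bound on a different family. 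The change-of-basis route is the one that could work, but it is precisely the computation you have left undone. (Your secondary caveat about $q$ even applies equally to the paper, which also relies only on the odd-characteristic classification.)
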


%

\subsection{Worked example; $n=s=3$}
\label{ssec:explicit}

Let $F(y) = (y-\gamma)(y-\gamma^{\sigma})(y-\gamma^{\sigma^2})$ for $\gamma \in \FF_{q^3}\backslash\Fq$. Then $f=x^3-\gamma$ is an irreducible divisor of $F(x^3)$. 

\[
E_F = \langle 1,x^3,x^6\rangle = \langle 1,\gamma,\gamma^2\rangle.
\]
\[
V_f = \langle 1,x,x^2\rangle_{E_F}.
\]
Then
\[
M_a = \npmatrix{a&0&0\\0&a^{\sigma^2}&0\\0&0&a^{\sigma}};
\]
\[
M_x = \npmatrix{0&0&\gamma\\1&0&0\\0&1&0};
\]

\[
M_g = \npmatrix{g_0+g_3\gamma+g_6\gamma^2&g_2\gamma+g_5\gamma^2+g_8\gamma^3&g_1\gamma+g_4\gamma^2+g_7\gamma^3\\g_1^{\sigma^2}+g_4^{\sigma^2}\gamma+g_7^{\sigma^2}\gamma^2&g_0^{\sigma^2}+g_3^{\sigma^2}\gamma+g_6^{\sigma^2}\gamma^2&g_2^{\sigma^2}\gamma+g_5^{\sigma^2}\gamma^2+g_8^{\sigma^2}\gamma^3\\g_2^{\sigma}+g_5^{\sigma}\gamma+g_8^{\sigma}\gamma^2&g_1^{\sigma}+g_4^{\sigma}\gamma+g_7^{\sigma}\gamma^2&g_0^{\sigma}+g_3^{\sigma}\gamma+g_6^{\sigma}\gamma^2};
\]
Hence the spread set of $S_{3,3,1}(\eta,\rho)$ is
\[
\C(S_{3,3,1}(\eta,\rho)) = \left\{\npmatrix{g_0+\eta g_0^\rho\gamma&g_2\gamma&g_1\gamma\\g_1^{\sigma^2}&g_0^{\sigma^2}+(\eta g_0^\rho)^{\sigma^2}\gamma&g_2^{\sigma^2}\gamma\\g_2^{\sigma}&g_1^{\sigma}&g_0^{\sigma}+(\eta g_0^\rho)^{\sigma}\gamma} : g_i \in \FF_{q^3}\right\}.
\]
We have that $S_{3,3,1}(\eta,\rho)$ is a semifield of order $q^9$, with right nucleus of order $q^3$. If $\rho=\sigma$ then its left and middle nuclei and centre all have order $q$. As far as the author is aware, there are no known constructions for semifields with these parameters.

\section{Proof that the family contains new codes}
\label{sec:isnew}

\subsection{Nuclei of semifields in the family $S_{n,s,1}$}

Due to the range of existing constructions for semifields, it is difficult to ascertain in general when this construction leads to new semifields. In this paper we will be satisfied with showing that there are certain parameters where this construction is definitely new. We do this by calculating the nuclei. More generally for MRD codes it is much easier to guarantee newness, as there are fewer existing constructions to consider.

\begin{theorem}\label{thm:nuclei}
Let us assume that $k\leq \frac{n}{2}$, and $sk>1$. Let $\C=S_{n,s,k}(\eta,\rho,F)$, and let $\C'$ be any code equivalent to $\C$ containing the identity.

If $\eta\ne 0$ then
\[
\I_l(\C') \simeq L_{\rho}
\]
\[
\I_r(\C') \simeq L_{\sigma^{sk}\rho^{-1}}
\]
\[
C(\C') \simeq Z(R_F)
\]
\[
Z(\C') \simeq K_\rho.
\]

If $\eta=0$ then $S_{n,s,k}(0,\rho,F)=S_{n,s,k}(0,0,F)$ for all $\rho$, and
\[
\I_l(\C') \simeq L
\]
\[
\I_r(\C') \simeq L
\]
\[
C(\C') \simeq Z(R_F)
\]
\[
Z(\C') \simeq K.
\]

\end{theorem}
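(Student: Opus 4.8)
The strategy is to compute the idealisers and centraliser directly on a convenient representative of the code, then invoke Proposition \ref{prop:idequiv} to transfer the cardinalities (and, via that proposition, the explicit conjugacy) to any equivalent code $\C'$ containing the identity. First I would work with $\C = S_{n,s,k}(\eta,\rho,F)$ itself, realised inside $R_F \simeq M_n(E_F)$ via $a \mapsto M_a$ (left multiplication in the quotient ring), and note that $\C$ contains the invertible element corresponding to $a=1$ exactly when $\eta a_0^\rho = a_{ks}$ is consistent with $a_0=1$, i.e.\ it contains $M_1 = \id$ only if $\eta=0$; in the case $\eta\neq 0$ one first replaces $\C$ by an equivalent code through $\id$ (this is legitimate since $\C$ contains invertible elements, by the MRD property of Theorem \ref{thm:construction}), and the nuclear parameters are by definition those of such a $\C'$. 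So it suffices to carry out the computation for $\C' = A\C B$ (suitable invertible $A,B$) or, more cleanly, to compute $\I_\ell,\I_r,C$ of $\C$ directly as subsets of $R_F$ and then use Proposition \ref{prop:idequiv} to read off the isomorphism types and sizes for $\C'$.

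The heart of the matter is the following: an element $b \in R_F$ lies in $\I_\ell(\C)$ iff $b\cdot M_a \in \C$ for every $a$ with $\deg a \le ks$ and $a_{ks} = \eta a_0^\rho$; since $b M_a = M_{ba}$, this says $ba \bmod F(x^n)$ again has degree $\le ks$ with the twisted relation between its top and bottom coefficients. The key structural input is that multiplication by $L$ on the left (i.e.\ $b = c \in L$, giving $M_{ca}$ with coefficients $c a_i$) sends the relation $a_{ks} = \eta a_0^\rho$ to $c a_{ks} = c\,\eta a_0^\rho = \eta (c^{1/\rho} a_0)^\rho \cdot (\text{correction})$, so that $c \in \I_\ell$ forces $c a_{ks} = \eta (c a_0)^\rho$, i.e.\ $c = \eta c^\rho \eta^{-1}\cdot$(unit), which pins down $c$ to the subfield $L_\rho := \{c : \eta c = \eta c^\rho\}$-style condition — more precisely, to the copy of $L$ acting by the twisted scalar action $c \mapsto [a \mapsto ca]$ when $\eta\neq 0$, which is why the answer is written $L_\rho$. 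One must also check that \emph{no} element of $R_F$ of higher skew-degree can lie in $\I_\ell$: here the hypotheses $k \le n/2$ and $sk > 1$ are essential, since they guarantee via Proposition \ref{prop:rank} and Theorem \ref{thm:norm} that products $ba$ cannot collapse to low skew-degree for degree reasons (the code has minimum distance $> n/2$, so its elements have rank $> n/2$ and the idealiser elements cannot themselves be singular in a way that would create extra freedom). The right idealiser is handled symmetrically, with the asymmetry $a_{ks}=\eta a_0^\rho$ producing the shift $\sigma^{sk}\rho^{-1}$: multiplying on the right by $c\in L$ shifts coefficients by $\sigma$-powers because $x^{ks} c = c^{\sigma^{ks}} x^{ks}$, so matching the top and bottom coefficient relation forces $c^{\sigma^{ks}} = \eta^{?}$-twisted version of $c^\rho$, giving $\I_r \simeq L_{\sigma^{sk}\rho^{-1}}$. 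For the centraliser: $C(\C)$ consists of $b$ commuting with every $M_a$, $a$ ranging over $\C$; since $\C$ contains $M_x$ (take $a = x$, which satisfies the relation trivially when $k\ge 1$ as both relevant coefficients vanish unless $ks = 1$, and $ks>1$ here) and the scalars $M_{a_0}$ for enough $a_0$, commuting with all of these forces $b$ to lie in the centre $Z(R_F) \simeq E_F$, and conversely $Z(R_F)$ is central in all of $R_F$. Finally $Z(\C') = \I_\ell(\C') \cap C(\C') \simeq L_\rho \cap Z(R_F)$, which is the fixed field $K_\rho = \{c\in K : c = c^\rho\}$, shrinking to $K$ when $\eta = 0$ (where the twist disappears and $\I_\ell \simeq \I_r \simeq L$).

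The case $\eta = 0$ is easier and essentially already in the literature for Petit's semifields: then $\C = S_{n,s,k}(0,0,F)$ visibly contains all $M_c$, $c \in L$, on both sides (multiplying $a$ of degree $\le ks$ with vanishing relation $a_{ks}=0$ by a scalar preserves both properties), so $\I_\ell \supseteq L \supseteq \I_\ell$ after checking no larger ring fits, and symmetrically for $\I_r$; the $\eta\neq 0$ bound on sizes then degenerates correctly. I expect the \textbf{main obstacle} to be the upper bounds: showing $\I_\ell(\C)$ is \emph{no larger} than the claimed copy of $L$ (and likewise $\I_r$, $C$). This requires a careful degree/rank argument — one must rule out idealiser elements $b$ whose skew-degree exceeds $0$, using that $b M_a \in \C$ for \emph{all} admissible $a$ (in particular for $a$ achieving full rank $n-k+1$, and for $a$ of the special twisted form), combined with the fact that $b$ itself must be invertible in $R_F$ (an idealiser of a code with an invertible element that is closed under the action must consist of invertibles, as noted at the end of Section \ref{ssec:semi}), and then extracting from $b a \equiv (\text{code element}) \bmod F(x^n)$ enough linear constraints on the coefficients of $b$ to force $b \in L$. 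The bookkeeping is where the constraints $k \le n/2$ and $sk>1$ get used, and getting that bookkeeping clean — rather than the conceptual skeleton above — is the real work.
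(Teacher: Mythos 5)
Your skeleton matches the paper's strategy — compute $\I_\ell$, $\I_r$, $C$ directly on the skew-polynomial representative, read off the fixed-field conditions $g_0=g_0^\rho$ and $g_0^{\sigma^{sk}}=g_0^\rho$ from the twisted coefficient relation, pass to $\C'=\C\cdot x^{-1}$ (which contains the identity and all scalars) for the centraliser, and intersect for the centre. The easy halves are right. But there is a genuine gap exactly where you flag "the real work": you never prove that an idealiser element must be a constant polynomial, and the route you sketch for it is not the one that works. You propose to rule out higher skew-degree via rank/minimum-distance considerations and via invertibility of idealiser elements; neither is the operative mechanism. (Invertibility of nonzero idealiser elements is automatic for semifield spread sets, where $\I_\ell(\C)\subseteq\C$, but for $k>1$ the containment $A\C\subseteq\C$ with $A$ of low rank is not excluded by any minimum-distance argument you state, and the paper makes no use of such an argument.)

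The actual argument is a direct coefficient comparison, in two stages, and this is where both hypotheses enter concretely. First, for $g\in\I_\ell(\C)$ with $\deg(g)\le sk$: since $sk>1$, the monomial $x^{sk-1}$ lies in $\C$, so $gx^{sk-1}\bmod F(x^n)\in\C$; the hypothesis $k\le n/2$ gives $\deg(gx^{sk-1})\le 2sk-1<ns$, so \emph{no reduction modulo $F(x^n)$ occurs}, and comparing degrees forces $g=g_0$. Second, to bound $\deg(g)$ by $sk$ in the first place (when $\eta\ne0$, so that $1\notin\C$ and one cannot conclude $\I_\ell(\C)\subseteq\C$), one must expand $gx\bmod F(x^n)$ and $gx^2\bmod F(x^n)$ explicitly, use membership in $\C$ to force $g_{i-1}=0$ for $i\not\equiv 0\pmod n$ and $g_{i-1}=g_{ns-1}F_{i/n}$ otherwise in the range $sk<i<ns$, and then play the coefficient of $x^{ks}$ against the constant coefficient (via the relation $a_{ks}=\eta a_0^\rho$) to conclude $g_{ns-1}=0$; the case $ks=2$ needs separate treatment. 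None of this bookkeeping is present in your proposal, and since it is precisely the step that distinguishes the $\eta\ne0$ idealisers from $L$, the proof is incomplete as it stands. A smaller correction: for $\eta\ne 0$ the code $\C$ does \emph{not} contain the scalars $M_{a_0}$ (each comes glued to $\eta a_0^\rho x^{sk}$), so your centraliser computation must be carried out on $\C'=\C\cdot x^{-1}$, whose elements do include all of $L$ and the element $x$, not on $\C$ itself.
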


\begin{proof}
Let $\C= S_{n,s,k}(\eta,\rho,F)$. First we compute $\I_\ell(\C)$. We claim that $\I_\ell(\C)$ consists of constant polynomials. 

Suppose $g\in \I_\ell(\C)$, and $\deg(g)\leq sk$. Then since $sk>1$, we have $x^{sk-1}\in \C$, and hence $gx^{sk-1}\mod F(x^n)\in \C$. Now since $\deg(gx^{sk-1})\leq 2sk-1<ns$, we have $gx^{sk-1} \in \C$. But $gx^{sk-1} = g_0x^{sk-1}+g_1x^{sk}+\ldots g_{sk}x^{2sk-1}$, and so $g = g_0$. Clearly if $a\in \C$ with $a_0=0$, then $g_0 a\in \C$ for all $g_0\in L$. Now $g_0(a_0+\eta a_0^{\rho}x^s)\in \C$ if and only if $g_0\eta a_0^{\rho} = \eta (g_0a_0)^\rho$, and so if $\eta\ne0$ then $g_0^{\rho}=g_0$. Hence 
\[
\{g\in \I_\ell(\C)|\deg(g)\leq sk\} \simeq \left\{\begin{array}{ll}L_{\rho}&\eta \ne 0\\L&\eta=0.\end{array}\right.
\]
A similar argument shows that if $g\in\I_r(\C)$ and $\deg(g)\leq sk$, then $g=g_0$. Then $(a_0+\eta a_0^{\rho}x^s)g_0\in \C$ if and only if $g_0^{\sigma^{sk}}\eta a_0^{\rho} = \eta (g_0a_0)^\rho$, and so if $\eta\ne 0$ then $g_0^{\rho}=g_0^{\sigma^{sk}}$. Hence 
\[
\{g\in \I_r(\C)|\deg(g)\leq sk\} \simeq \left\{\begin{array}{ll}L_{\sigma^{sk}\rho^{-1}}&\eta \ne 0\\L&\eta=0.\end{array}\right.
\]
Hence to complete the calculation of the left and right idealisers, it suffices to show that an element of either idealiser has degree at most $sk$. If $\eta=0$, then $1\in \C$, and so $\I_{\ell}(\C),\I_r(\C)\subset \C$. Hence we assume for the remainder of this proof that $\eta \ne 0$.

If $g\in \I_\ell(\C)$, we must have that $gax^m\mod F(x^n)\in\C$ for all $a\in L$, $m\in \{1,\ldots,x^{sk-1}\}$. As $sk>1$, this set is non-empty. 

Consider $m=1$. Then 
\[
gx \mod F(x^n) = \left(\sum_{i=1}^{ns-1} g_{i-1}x^i\right) - g_{ns-1}\left( \sum_{j=0}^{s-1}F_j x^{nj} \right).
\]
Hence for all $i\in \{sk+1,\ldots,ns-1\}$ we have
\[
g_{i-1} = \left\{\begin{array}{ll} 0 &i \not\equiv 0 \mod n\\g_{ns-1}F_{i/n}&i\equiv 0 \mod n. \end{array}\right.
\]
 We want to show that $g_{ns-1}=0$, implying $\deg(g) \leq ks-1$. The coefficient of $x^{ks}$ in $[gx\mod F(x^n)]$ is $g_{ks-1}-g_{ns-1}F_{ks/n}$, while the constant coefficient is $-g_{ns-1}F_0$. Therefore as $[gx\mod F(x^n)] \in \C$, we need
\[
g_{ks-1} -g_{ns-1}F_{ks/n}= -\eta F_0^{\rho}g_{ns-1}^\rho.
\]
Now if $ks>2$, we also have that $[gx^2 \mod F(x^n)]\in \C$. The coefficient of $x^{sk+1}$ in $[gx^2 \mod F(x^n)]$ is $g_{ks-1} -g_{ns-1}F_{ks/n}$, and so this must be zero. Hence we have that $-\eta F_0^{\rho}g_{ns-1}^\rho=0$, and since $\eta\ne 0 $ and $F_0\ne 0$, we must have $g_{ns-1}=0$, as claimed.

Suppose now $ks=2$. If $s=1$, then we are in the case of twisted gabidulin codes, for which the idealisers were calculated in \cite{SheekeyMRD}, \cite{LuTrZh2015}. If $[k,s]=[1,2]$, then $g = g_0+g_1x + g_{2n-1} (x^{2n-1}+F_1 x^{n-1})$. Now $g(1+\eta x^2) \mod F(x^n)\in \C$, and since the coefficient of $x^{2n-1}$ in this is $g_{2n-1}$, we have $g_{2n-1}=0$ and $\deg(g)<2=sk$.

Hence the left idealiser is as claimed. The proof for the right idealisers is similar. In order to use Proposition \ref{prop:nuc}, we need to consider a set equivalent to $\C$ which contains the identity, and calculate its centraliser and centre.  

Let $z(x^n)\in Z(R)$ be such that $\deg(z(x^n))<sn$ and $z(x^n)x^{ns} \equiv 1 \mod F(x^n)$. Such a $z$ exists, since $Z(R)/Z(R)F(x^n)$ is a field. Then $z(x^n)x^{ns-1}$ is the inverse of $x$ in $R_F$. Hence
\[
\C' := \C z(x^n)x^{ns-1}
\]
is a spread set equivalent to $\C$, and $\C'$ contains the identity. We have that
\[
\C' = \{ a_1+a_2x+a_3x^2+\cdots + a_{sk-2}x^{sk-2} +a_0z(x^n)x^{ns-1}+\eta a_0^\rho x^{s-1} :a_i \in L\}.
\]
Now as $a_1\in \C$ for all $a_1\in L$, we have that if $g\in C(\C')$ then $ga_1-a_1g \in RF(x^n)$ for all $a_1\in L$. As $\deg(ga_1-a_1g)< ns$, we must have $g\in L[x^n;\sigma]$, and $\deg(g)\leq n(s-1)$. Furthermore, as $x\in \C'$, we have that if $g\in C(\C')$ then $gx-xg \in RF(x^n)$. But as $\deg(g)\leq n(s-1)$, we must have $gx=xg$, and so $g \in K[x;\sigma]$. Therefore we must have $g\in K[x^n;\sigma] = Z(R)$. Hence $C(\C') = E_F$.

Now by Proposition \ref{prop:idequiv}, $\I_\ell(\C') = \I_\ell(\C)$. Hence $Z(\C') = \I_\ell(\C')\cap C(\C') = \{g_0:g_0\in K,g_0^\rho=g_0\}\simeq K_\rho$, completing the proof.
\end{proof}

\begin{corollary}
\label{cor:nuc}
The finite semifields $S_{n,s,1}(\eta,\rho,F)\leq M_{n}(\FF_{q^s})\leq M_{ns}(\Fq)$ with $q = p^{e}$, $x^{\rho}=x^{p^i}$, $x^{\sigma}= x^{p^j}$, $s>1$, have parameters 
\[
(p^{nse},p^{(ne,i)},p^{(ne,sj-i)},p^{s(ne,j)},p^{(ne,i,j)})
\]
when $s>1$, while $S_{n,s,1}(0,0,F)$ has parameters 
\[
(p^{nse},p^{ne},p^{ne},p^{se},p^e).
\]
\end{corollary}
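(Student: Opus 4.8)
The plan is to read Corollary~\ref{cor:nuc} off from Theorem~\ref{thm:nuclei} by specialising to $k=1$ and then converting the abstract descriptions of the idealisers, centraliser and centre into orders of explicit subfields of $L=\FF_{p^{ne}}$. First I would check the hypotheses of Theorem~\ref{thm:nuclei}: since $\sigma$ has order $n$ we have $n\ge 2$, so $k=1\le n/2$, and $sk=s>1$ by assumption, so the theorem applies with $sk$ replaced by $s$. As $S_{n,s,1}(\eta,\rho,F)$ need not itself contain the identity when $\eta\ne 0$, one passes (as in the proof of Theorem~\ref{thm:nuclei}) to an equivalent code $\C'$ containing the identity; by definition the nuclear parameters of $S_{n,s,1}(\eta,\rho,F)$ are $(|\C'|,|\I_l(\C')|,|\I_r(\C')|,|C(\C')|,|Z(\C')|)$, and by Proposition~\ref{prop:nuc} these also equal the orders of the left, middle and right nuclei and the centre of the isotopic semifield. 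Theorem~\ref{thm:nuclei} evaluates each factor: for $\eta\ne 0$, $\I_l(\C')\simeq L_\rho$, $\I_r(\C')\simeq L_{\sigma^{s}\rho^{-1}}$, $C(\C')\simeq Z(R_F)=E_F$, $Z(\C')\simeq K_\rho$; for $\eta=0$, $\I_l(\C')\simeq\I_r(\C')\simeq L$, $C(\C')\simeq E_F$, $Z(\C')\simeq K$. The first entry $|\C'|=q^{ns}=p^{nse}$ is immediate from Theorem~\ref{thm:construction}.

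What remains is an elementary gcd computation. Write $\rho\colon x\mapsto x^{p^i}$ and $\sigma\colon x\mapsto x^{p^j}$. The fixed field of an automorphism $x\mapsto x^{p^t}$ of $\FF_{p^{ne}}$ is $\FF_{p^{(ne,t)}}$, so $|L_\rho|=p^{(ne,i)}$; and since $\rho^{-1}$ is $x\mapsto x^{p^{\ell}}$ for any $\ell\equiv-i\pmod{ne}$, the composite $\sigma^{s}\rho^{-1}$ is $x\mapsto x^{p^{sj-i}}$ (exponent read modulo $ne$), so $|L_{\sigma^{s}\rho^{-1}}|=p^{(ne,sj-i)}$. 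For the last two entries I would use the standing constraints on $\sigma$: its fixed field being $K=\Fq=\FF_{p^e}$ means $(ne,j)=e$ (hence $e\mid j$ and $e\mid ne$), and then its order is $ne/(ne,j)=n$, consistently. So $|C(\C')|=|E_F|=|K|^{s}=p^{se}=p^{s(ne,j)}$. Finally $K_\rho=K\cap L_\rho=\FF_{p^{e}}\cap\FF_{p^{(ne,i)}}=\FF_{p^{(e,(ne,i))}}$; since $e\mid ne$ we have $(e,(ne,i))=(e,ne,i)=(e,i)$, and since $(ne,j)=e$ we have $(ne,i,j)=((ne,j),i)=(e,i)$ too, so $|Z(\C')|=p^{(ne,i,j)}$. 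Assembling these gives $(p^{nse},p^{(ne,i)},p^{(ne,sj-i)},p^{s(ne,j)},p^{(ne,i,j)})$ for $\eta\ne 0$; the same arithmetic with $\I_l(\C')\simeq\I_r(\C')\simeq L$ and $Z(\C')\simeq K$ gives $(p^{nse},p^{ne},p^{ne},p^{se},p^{e})$ for $\eta=0$, which is exactly the known parameter list of Petit's cyclic semifields with $q=p^e$.

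I do not expect a real obstacle here: the mathematical content is all in Theorem~\ref{thm:nuclei}, and the rest is number theory. The only points demanding care are the two bookkeeping identities $p^{se}=p^{s(ne,j)}$ and $(e,(ne,i))=(ne,i,j)$, both of which reduce to the single relation $(ne,j)=e$ forced by $\sigma$ having fixed field $\Fq$ and order $n$, together with keeping track of the exponent sign when forming $\sigma^{s}\rho^{-1}$. If one instead stated the corollary intrinsically in terms of $|L_\rho|$, $|L_{\sigma^{s}\rho^{-1}}|$, $|E_F|$ and $|K_\rho|$, it would be a verbatim transcription of the $k=1$ case of Theorem~\ref{thm:nuclei}, with no computation at all.
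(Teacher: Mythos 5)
Your proposal is correct and is exactly the argument the paper intends: the corollary is read off from Theorem \ref{thm:nuclei} with $k=1$, identifying $L_\rho$, $L_{\sigma^{s}\rho^{-1}}$, $E_F$ and $K_\rho$ with explicit subfields and using $(ne,j)=e$ to reconcile $p^{se}=p^{s(ne,j)}$ and $(e,i)=(ne,i,j)$. The paper supplies no separate proof, so there is nothing further to compare.
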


\begin{remark}
When $s=k=1$, we are in the case of a Generalised Twisted Field. The nuclei of these were calculated by Albert \cite{Albert1961}: a semifield with multiplication
\[
xy-\eta x^{p^i}y^{p^j},\quad x,y\in \FF_{p^{ne}}
\]
has nuclear parameters
\[
(p^{ne},p^{(ne,i)},p^{(ne,j-i)},p^{(ne,j)},p^{(ne,i,j)}).
\]
We can see that this is precisely the parameters found by setting $s=1$ in Corollary \ref{cor:nuc}.
\end{remark}


Note that this also implies the results in \cite{TrZh} regarding the parameters of the the twisted Gabidulin codes.

\subsection{Nuclei of known constructions for semifields}

There are many known constructions for semifields. However, most of them are only valid for certain characteristics or nuclei. For example, many constructions focus on the case of semifields two-dimensional over a nucleus. Other constructions which work for many dimensions over a nucleus (for example, that of Pott-Zhou \cite{PottZhou}), restrict to the case of commutative semifields. In a commutative semifield we have that $|Z| = |\NN_l|=|\NN_r| \leq |\NN_m|$. If we avoid these cases, as well as the parameters of any generalised twisted field, then we can be sure that this construction gives new semifields.

For example, to the author's knowledge there is no known construction for a semifield of order $q^{12}$ with centre of order $q$ and each nucleus having order $q^2$. The semifield $S_{6,2,1}(\eta,\rho)$ with $x^{\rho} = x^{q^4}$ has these properties, and hence is new.

%

\begin{theorem}
The family $S_{n,s,1}(\eta,\rho,F)$ contains new semifields for some choices of $n,s$.
\end{theorem}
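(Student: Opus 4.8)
The plan is to produce explicit parameters $(n,s)$ for which Corollary~\ref{cor:nuc} gives a nuclear-parameter tuple attained by no construction currently in the literature. I would take $k=1$, $n=6$, $s=2$, and let $\rho$ be the automorphism $x\mapsto x^{q^4}$ of $L=\FF_{q^6}$; since $\gcd(4e,6e)=2e$, this $\rho$ fixes $\FF_q=K$, so it is an automorphism of $L$ over $K$ and Theorem~\ref{thm:construction} applies with $K'=K$. Here $(-1)^{s(n-1)}=(-1)^{10}=1$ and $F_0\in K$, so the hypothesis on $\eta$ reduces to $N_{L:K}(\eta)\,F_0\neq 1$; because $N_{L:K}\colon \FF_{q^6}\to \FF_q$ is surjective, such an $\eta$ exists, and $S_{6,2,1}(\eta,\rho,F)$ is a semifield of order $q^{12}$.

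Next I would apply Corollary~\ref{cor:nuc}, whose hypotheses $k\le n/2$, $sk>1$, $s>1$ all hold. Writing $q=p^{e}$, we have $x^{\rho}=x^{p^{4e}}$ and $x^{\sigma}=x^{p^{e}}$, so $i=4e$, $j=e$, $ne=6e$, giving $(ne,i)=2e$, $(ne,sj-i)=(6e,-2e)=2e$, $s(ne,j)=2e$ and $(ne,i,j)=e$. Hence the nuclear parameters are $(q^{12},q^{2},q^{2},q^{2},q)$: all three nuclei have order $q^{2}$ and the centre has order $q$.

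The remaining work is to separate this tuple from the known families, using the invariance of nuclear parameters (Proposition~\ref{prop:idequiv}) together with Proposition~\ref{prop:nuc}. A commutative semifield has $|Z|=|\NN_\ell|=|\NN_r|$, which fails here, so the Pott--Zhou family and every other commutative construction is excluded. For a generalised twisted field of order $p^{m}$ with parameters $(p^{m},p^{(m,i')},p^{(m,j'-i')},p^{(m,j')},p^{(m,i',j')})$, matching the order forces $m=12e$, and matching the left and right nuclei forces $2e\mid i'$ and $2e\mid j'$, whence $2e\mid (m,i',j')$; but this entry must equal $e$, a contradiction. For the Petit (cyclic) semifields with parameters $(Q^{NS},Q^{N},Q^{N},Q^{S},Q)$, matching the centre gives $Q=q$, then $N=2$ from the left nucleus and $S=2$ from the right nucleus, forcing order $q^{4}\neq q^{12}$. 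Finally, a semifield two-dimensional over one of its nuclei would satisfy $|\SSS|=q^{12}=(q^{2})^{2}$, which is false, ruling out the Hughes--Kleinfeld and other constructions two-dimensional over a nucleus; the remaining families in the surveys are confined to special characteristics. Thus $S_{6,2,1}(\eta,\rho,F)$ is isotopic to no known semifield, proving the theorem.

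The principal obstacle is not any single computation but the completeness and honesty of the last step: there is no finite published classification of semifields to invoke, so one must argue that the structural invariants above (the commutativity constraint, the gcd identities for twisted fields, the rigid cyclic-semifield tuple, and two-dimensionality over a nucleus) already exclude every family catalogued in \cite{LaPo2011} and \cite{Kantor2006}. A careful writer would also double-check that the chosen data $(n,s,\rho,\sigma,\eta)$ genuinely meets every hypothesis of Theorem~\ref{thm:nuclei} and Corollary~\ref{cor:nuc}, since those results are the engine of the argument.
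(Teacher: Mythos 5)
Your proposal is correct and is essentially the paper's own argument: the paper proves this theorem by exhibiting exactly the same example $S_{6,2,1}(\eta,\rho)$ with $x^{\rho}=x^{q^4}$, computing the nuclear parameters $(q^{12},q^2,q^2,q^2,q)$ via Corollary~\ref{cor:nuc}, and asserting that no known construction attains them. If anything you are more careful than the paper, which simply states ``to the author's knowledge there is no known construction'' where you give explicit gcd/parameter arguments ruling out commutative families, generalised twisted fields, Petit semifields, and constructions two-dimensional over a nucleus (just make sure to take $\eta\neq 0$ so that the first case of Theorem~\ref{thm:nuclei} applies).
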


Other parameters for which there were no known constructions, and for which the new construction gives semifields with these parameters, are tabulated here. Here we take $L = \FF_{q^{ne}}$, $x^\sigma = x^{q^e}$, $x^{\rho}=x^{q^i}$. We arbitrarily restrict to semifields at most $20$-dimensional over their centre.
\[
\begin{array}{c|c}
(n,s,e,i)&N\\
\hline
(3,3,1,3)&(q^{9},q^3,q^3,q^3,q)\\
(6,2,1,4)&(q^{12},q^2,q^2,q^2,q)\\
(8,2,1,4)&(q^{16},q^4,q^2,q^2,q)\\
(6,3,1,0)&(q^{18},q^6,q^3,q^3,q)\\
\end{array}
\]
It is likely that this construction produces new semifields for almost all parameters. However it requires further research to establish this precisely. Furthermore, different choices for the irreducible polynomial $F$ can lead to non-isotopic semifields. This again is a topic for further research.

As there are fewer constructions for MRD codes, it is much easier to establish newness, as for almost all parameters the only constructions to compare to are the Twisted Gabidulin codes.

\begin{theorem}
The family $S_{n,s,k}(\eta,\rho,F)$ contains new MRD codes for $k>1$, for almost all $n,s>1$.
\end{theorem}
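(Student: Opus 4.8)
The plan is to distinguish $\C:=S_{n,s,k}(\eta,\rho,F)$ from the (short) list of previously known MRD codes of the same size and minimum distance by means of its nuclear parameters, and to arrange, for all $(n,s)$ outside a small explicit exceptional set, a choice of $\eta,\rho,F$ and of an admissible $k>1$ (say $k=2$, which requires $n\ge 4$) for which no match occurs. Fix such a $k$ with $1<k\le n/2$, so Theorem~\ref{thm:nuclei} applies; choosing $\eta\ne 0$ satisfying the norm condition of Theorem~\ref{thm:construction}, it gives
\[
\big(|\C|,\,|\I_\ell(\C)|,\,|\I_r(\C)|,\,|C(\C)|,\,|Z(\C)|\big)=\big(q^{nsk},\,|L_\rho|,\,|L_{\sigma^{sk}\rho^{-1}}|,\,q^{s},\,|K_\rho|\big),
\]
where $L_\tau=\mathrm{Fix}_{\Fqn}(\tau)$ and $K_\rho=\mathrm{Fix}_{\Fq}(\rho)$.

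First I would cut down the list of possible equivalent codes. The ambient of $\C$ is $R_F\simeq M_n(\FF_{q^s})$, whose centre recovers $\FF_{q^s}$ and whose rank function has range $\{0,\dots,n\}$; since code equivalence is an isometry of the ambient-with-rank, any code equivalent to $\C$ lies in a copy of $M_n(\FF_{q^s})$, and matching size and minimum distance forces it to have the same parameters $(n,s,k)$. Among known constructions with general minimum distance, the only ones producing codes inside $M_n(\FF_{q^s})$ (as opposed to only inside $M_{ns}(\Fq)$) are the generalised Gabidulin codes $\G_{k,\sigma'}$, the generalised twisted Gabidulin codes $\HH_k(\eta',\rho')$ of \cite{SheekeyMRD},\cite{LuTrZh2015},\cite{Ozbudak1}, and—when $n$ is even—the family of \cite{TrZhHughes}, all built over $\FF_{q^s}$ using its degree-$n$ extension $\FF_{q^{ns}}$ with the same $k$. (As an aside, viewed as an $\Fq$-linear subset of $M_{ns}(\Fq)$ the code $\C$ has $\Fq$-distance $s(n-k+1)$ and size $q^{nsk}$, strictly below the Singleton bound for $s,k>1$, so $\C$ is not $\Fq$-MRD; this alone rules out all constructions living in $M_{ns}(\Fq)$.) The generalised Gabidulin code has $|\I_\ell(\G_{k,\sigma'})|=q^{ns}$, and the \cite{TrZhHughes} codes have an idealiser of order $q^{ns/2}$, whereas $|\I_\ell(\C)|=|L_\rho|\le q^{n}$; since $s>1$ and $\rho$ will be chosen non-trivial, these cannot coincide, so neither family contains a code equivalent to $\C$.

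It remains to separate $\C$ from the twisted Gabidulin codes over $\FF_{q^s}$, whose nuclear parameters (by \cite{SheekeyMRD},\cite{LuTrZh2015},\cite{Ozbudak1},\cite{TrZh},\cite{LiebNebe}) are
\[
\big(q^{nsk},\,|\mathrm{Fix}_{\FF_{q^{ns}}}(\rho')|,\,|\mathrm{Fix}_{\FF_{q^{ns}}}(\sigma'^{k}\rho'^{-1})|,\,q^{s},\,|\mathrm{Fix}_{\FF_{q^{s}}}(\rho')|\big),
\]
with $\rho'$ any automorphism of $\FF_{q^{ns}}$ and $\sigma'$ a generator of $\Gal(\FF_{q^{ns}}:\FF_{q^s})$. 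The crucial difference is that here the left idealiser and the centre are linked through the \emph{large} field $\FF_{q^s}$, namely $\mathrm{Fix}_{\FF_{q^s}}(\rho')=\mathrm{Fix}_{\FF_{q^{ns}}}(\rho')\cap\FF_{q^s}$, while for $\C$ the centre is $K_\rho=L_\rho\cap\Fq$, the intersection with the \emph{small} field $\Fq$. So when $\gcd(n,s)>1$, I would pick a common prime divisor $\pi$ and set $\rho=\sigma_q^{\pi}$: then $|L_\rho|=q^{\gcd(\pi,n)}=q^{\pi}$, but $|K_\rho|=q$ since $\sigma_q$ acts trivially on $\Fq$. Any twisted Gabidulin code over $\FF_{q^s}$ with $|\I_\ell|=q^{\pi}$ has $\mathrm{Fix}_{\FF_{q^{ns}}}(\rho')=\FF_{q^{\pi}}\subseteq\FF_{q^s}$ (as $\pi\mid s$), hence centre of order $q^{\pi}>q$; so its nuclear parameters differ from those of $\C$, and $\C$ is new. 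This settles every pair $(n,s)$ with $n\ge 4$ and $\gcd(n,s)>1$.

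For $\gcd(n,s)=1$ the short argument above collapses, and one must instead compare the full pair $(|L_\rho|,|L_{\sigma^{sk}\rho^{-1}}|)$ with all pairs $(|\mathrm{Fix}_{\FF_{q^{ns}}}(\rho')|,|\mathrm{Fix}_{\FF_{q^{ns}}}(\sigma'^{k}\rho'^{-1})|)$. Writing $q=p^{e}$, $\rho\colon x\mapsto x^{p^{i}}$, $\sigma=\sigma_q^{\ell}$ with $\gcd(\ell,n)=1$, this reduces to the elementary but slightly delicate problem of choosing $i$ and $\ell$ so that $\big(\gcd(en,i),\gcd(en,es\ell k-i)\big)$ is not of the form $\big(\gcd(ens,i'),\gcd(ens,es\ell'k-i')\big)$ for any $i'$ and any $\ell'$ coprime to $n$. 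Carrying this out uniformly in the parameters, and pinning down the finitely many exceptional $(n,s)$—which in any case include $n\le 3$, where no admissible $k>1$ with $k\le n/2$ exists, and require separate attention to the degenerate cases $n\mid sk$ and $s=2$—is the main obstacle; everything else is bookkeeping with Theorem~\ref{thm:nuclei} and the cited idealiser computations for Gabidulin-type codes. Once a suitable tuple is secured for each such $(n,s)$, a final check against the remaining finite list of sporadic constructions completes the proof.
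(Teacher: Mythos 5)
Your strategy --- distinguishing $S_{n,s,k}(\eta,\rho,F)$ from the known constructions via the idealiser/centraliser data of Theorem~\ref{thm:nuclei}, which Proposition~\ref{prop:idequiv} makes an equivalence invariant --- is exactly the argument the paper has in mind; indeed the paper states this theorem with no proof beyond the remark that for almost all parameters the only competitors are the twisted Gabidulin codes, so your write-up is considerably more careful than the source. The reduction to codes inside $M_n(\FF_{q^s})$ via the $\Fq$-rank distribution (noting $\C$ is not $\Fq$-MRD in $M_{ns}(\Fq)$ when $s,k>1$), the elimination of $\G_{k,\sigma'}$ and of the codes of \cite{TrZhHughes} by the size of the left idealiser, and the treatment of $\gcd(n,s)>1$ via the mismatch between $Z=\I_\ell\cap\Fq$ (for $\C$) and $Z=\I_\ell\cap\FF_{q^s}$ (for a twisted Gabidulin code over $\FF_{q^s}$) are all correct.

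The genuine gap is the one you flag yourself: the case $\gcd(n,s)=1$, which is not a small exceptional set (coprime pairs have positive density, so the statement ``for almost all $n,s>1$'' is not yet reached). Moreover it is worse than ``delicate bookkeeping'': over a prime field $q=p$ your code always has $|Z(\C)|=p$, and if you write $\rho\colon x\mapsto x^{p^i}$, $\sigma\colon x\mapsto x^{p^j}$ with $\gcd(s,i)=1$, then the generalised twisted Gabidulin code over $\FF_{p^s}$ with $i'=i$, $j'=j$ satisfies $\gcd(ns,i')=\gcd(n,i)\gcd(s,i)=\gcd(n,i)$ and $\gcd(ns,sj'k-i')=\gcd(n,sjk-i)$, since $\gcd(n,s)=1$ and $\gcd(s,sjk-i)=\gcd(s,i)=1$; its centraliser is $\FF_{p^s}$ and its centre is $\FF_{p^{\gcd(s,i')}}=\FF_p$. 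Thus the entire tuple $(|\I_\ell|,|\I_r|,|C|,|Z|)$ coincides with that of $\C$, and a short CRT argument shows the same happens for every admissible choice of $i,j$ in this case. So in the coprime case the nuclear parameters alone cannot separate the two families, and one needs a finer invariant (full automorphism groups, or a direct non-equivalence computation). To be fair, the paper does not close this either; but a complete proof requires either restricting the claim to pairs with $\gcd(n,s)>1$ or supplying a genuinely different argument for the coprime pairs.
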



%
%
%

\end{document}